\documentclass[12pt,draft]{article}
\usepackage[pagewise]{lineno}
\usepackage{amsfonts}
\usepackage{amsmath, amsthm, amsfonts, amssymb, color}
\usepackage{mathrsfs}
\usepackage{color}
\usepackage{stmaryrd}
\newtheorem{thm}{Theorem}[section]

\newtheorem{lem}[thm]{Lemma}

\newtheorem{rem}[thm]{Remark}
\theoremstyle{definition}
\newtheorem{defn}{Definition}[section]
\newcommand{\scr}[1]{\mathscr #1}
\definecolor{wco}{rgb}{0.5,0.2,0.3}

\numberwithin{equation}{section} \theoremstyle{remark}

\newcommand{\ua}{\uparrow}

\title{{\bf Extrinsic derivatives for  SDEs and SPDEs with distribution dependent noise }\footnote{Supported in
 part by NNSFC (12301180), RGC (21301925) and Research Center for Nonlinear Analysis at The Hong Kong Polytechnic University.\ \ }
}
\author{{\bf   Xiaochen Ma,   Panpan Ren}\\ \footnotesize{Department of Mathematics, City University of Hong Kong, Tat Chee Avenue, Hong Kong, China}\\ \footnotesize{   xiaocma2-c@my.cityu.edu.hk },\footnotesize{  panparen@cityu.edu.hk}}
\begin{document}
\allowdisplaybreaks
\def\R{\mathbb R}  \def\ff{\frac} \def\ss{\sqrt} \def\B{\mathbf
B}
\def\N{\mathbb N} \def\kk{\kappa} \def\m{{\bf m}}
\def\ee{\varepsilon}\def\ddd{D^*}
\def\dd{\delta} \def\DD{\Delta} \def\vv{\varepsilon} \def\rr{\rho}
\def\<{\langle} \def\>{\rangle} \def\GG{\Gamma} \def\gg{\gamma}
  \def\nn{\nabla} \def\pp{\partial} \def\E{\mathbb E}
\def\d{\text{\rm{d}}} \def\bb{\beta} \def\aa{\alpha} \def\D{\scr D}
  \def\si{\sigma} \def\ess{\text{\rm{ess}}}
\def\beg{\begin} \def\beq{\begin{equation}}  \def\F{\scr F}
\def\Ric{\text{\rm{Ric}}} \def\Hess{\text{\rm{Hess}}}
\def\e{\text{\rm{e}}} \def\ua{\underline a} \def\OO{\Omega}  \def\oo{\omega}
 \def\tt{\tilde} \def\Ric{\text{\rm{Ric}}}
\def\cut{\text{\rm{cut}}} \def\P{\mathbb P} \def\ifn{I_n(f^{\bigotimes n})}
\def\C{\scr C}   \def\G{\scr G}   \def\aaa{\mathbf{r}}     \def\r{r}
\def\gap{\text{\rm{gap}}} \def\prr{\pi_{{\bf m},\varrho}}  \def\r{\mathbf r}
\def\Z{\mathbb Z} \def\vrr{\varrho} \def\ll{\lambda}
\def\L{\scr L}\def\Tt{\tt} \def\TT{\tt}\def\II{\mathbb I}
\def\i{{\rm in}}\def\Sect{{\rm Sect}}  \def\H{\mathbb H}
\def\M{\scr M}\def\Q{\mathbb Q} \def\texto{\text{o}} \def\LL{\Lambda}
\def\Rank{{\rm Rank}} \def\B{\scr B} \def\i{{\rm i}} \def\HR{\hat{\R}^d}
\def\to{\rightarrow}\def\l{\ell}\def\iint{\int}
\def\EE{\scr E}\def\no{\nonumber}
\def\A{\scr A}\def\V{\mathbb V}\def\osc{{\rm osc}}
\def\BB{\scr B}\def\Ent{{\rm Ent}}\def\3{\triangle}\def\H{\scr H}
\def\U{\scr U}\def\8{\infty}\def\1{\lesssim}\def\HH{\mathrm{H}}
 \def\T{\scr T}
 \def\R{\mathbb R}  \def\ff{\frac} \def\ss{\sqrt} \def\B{\mathbf
B} \def\W{\mathbb W}
\def\N{\mathbb N} \def\kk{\kappa} \def\m{{\bf m}}
\def\ee{\varepsilon}\def\ddd{D^*}
\def\dd{\delta} \def\DD{\Delta} \def\vv{\varepsilon} \def\rr{\rho}
\def\<{\langle} \def\>{\rangle} \def\GG{\Gamma} \def\gg{\gamma}
  \def\nn{\nabla} \def\pp{\partial} \def\E{\mathbb E}
\def\d{\text{\rm{d}}} \def\bb{\beta} \def\aa{\alpha} \def\D{\scr D}
  \def\si{\sigma} \def\ess{\text{\rm{ess}}}
\def\beg{\begin} \def\beq{\begin{equation}}  \def\F{\scr F}
\def\Ric{\text{\rm{Ric}}} \def\Hess{\text{\rm{Hess}}}
\def\e{\text{\rm{e}}} \def\ua{\underline a} \def\OO{\Omega}  \def\oo{\omega}
 \def\tt{\tilde} \def\Ric{\text{\rm{Ric}}}
\def\cut{\text{\rm{cut}}} \def\P{\mathbb P} \def\ifn{I_n(f^{\bigotimes n})}
\def\C{\scr C}      \def\aaa{\mathbf{r}}     \def\r{r}
\def\gap{\text{\rm{gap}}} \def\prr{\pi_{{\bf m},\varrho}}  \def\r{\mathbf r}
\def\Z{\mathbb Z} \def\vrr{\varrho} \def\ll{\lambda}
\def\L{\scr L}\def\Tt{\tt} \def\TT{\tt}\def\II{\mathbb I}
\def\i{{\rm in}}\def\Sect{{\rm Sect}}  \def\H{\mathbb H}
\def\M{\scr M}\def\Q{\mathbb Q} \def\texto{\text{o}} \def\LL{\Lambda}
\def\Rank{{\rm Rank}} \def\B{\scr B} \def\i{{\rm i}} \def\HR{\hat{\R}^d}
\def\to{\rightarrow}\def\l{\ell}\def\iint{\int}
\def\EE{\scr E}\def\Cut{{\rm Cut}}
\def\A{\scr A} \def\Lip{{\rm Lip}}
\def\BB{\scr B}\def\Ent{{\rm Ent}}\def\L{\scr L}
\def\R{\mathbb R}  \def\ff{\frac} \def\ss{\sqrt} \def\B{\mathbf
B}
\def\N{\mathbb N} \def\kk{\kappa} \def\m{{\bf m}}
\def\dd{\delta} \def\DD{\Delta} \def\vv{\varepsilon} \def\rr{\rho}
\def\<{\langle} \def\>{\rangle} \def\GG{\Gamma} \def\gg{\gamma}
  \def\nn{\nabla} \def\pp{\partial} \def\E{\mathbb E}
\def\d{\text{\rm{d}}} \def\bb{\beta} \def\aa{\alpha} \def\D{\scr D}
  \def\si{\sigma} \def\ess{\text{\rm{ess}}}
\def\beg{\begin} \def\beq{\begin{equation}}  \def\F{\scr F}
\def\Ric{\text{\rm{Ric}}} \def\Hess{\text{\rm{Hess}}}
\def\e{\text{\rm{e}}} \def\ua{\underline a} \def\OO{\Omega}  \def\oo{\omega}
 \def\tt{\tilde} \def\Ric{\text{\rm{Ric}}}
\def\cut{\text{\rm{cut}}} \def\P{\mathbb P} \def\ifn{I_n(f^{\bigotimes n})}
\def\C{\scr C}      \def\aaa{\mathbf{r}}     \def\r{r}
\def\gap{\text{\rm{gap}}} \def\prr{\pi_{{\bf m},\varrho}}  \def\r{\mathbf r}
\def\Z{\mathbb Z} \def\vrr{\varrho} \def\ll{\lambda}
\def\L{\scr L}\def\Tt{\tt} \def\TT{\tt}\def\II{\mathbb I}
\def\i{{\rm in}}\def\Sect{{\rm Sect}}  \def\H{\mathbb H}
\def\M{\scr M}\def\Q{\mathbb Q} \def\texto{\text{o}} \def\LL{\Lambda}
\def\Rank{{\rm Rank}} \def\B{\scr B} \def\i{{\rm i}} \def\HR{\hat{\R}^d}
\def\to{\rightarrow}\def\l{\ell}
\def\8{\infty}\def\I{1}\def\U{\scr U}
\maketitle

\begin{abstract} The Bismut formula is a crucial tool characterizing regularities of stochastic systems, and  has been extensively studied for various models.
However it is not yet available for SDEs with distribution dependent noise. In this paper, we first establish a Bismut type formula for the extrinsic derivative of  McKean-Vlasov SDEs driven by distribution dependent noise, then make an extension to a class of  distribution dependent SPDEs.

\end{abstract} \noindent
 AMS subject Classification:\  60B05, 60B10.  \\
\noindent
 Keywords: Extrinsic formula, Bismut formula, McKean-Vlasov  dependent SDEs, distribution dependent SPDEs.

\tableofcontents 
\section{Introduction}

In 1984, Bismut \cite{BIS} found a derivative formula for the heat  semigroup on a Riemannian manifold by using Malliavin calculus,
which was then re-proved by Elworthy and Li \cite{BIS2} in 1994 using martingale arguments. Since then, this type of formula has been extensively studied and applied to various stochastic systems including SDEs and SPDEs, see for instance \cite{RZ25, T22, HARNACK} and references therein. 

On the other hand, as  crucial stochastic systems describing non-linear Fokker-Planck equations, McKean-Vlasov SDEs (also called distribution dependent SDEs)
have been intensively studied, see for instance   \cite{DH, HRW19}. To characterize the regularity of this type of SDEs with respect to the initial distribution, the Bismut formula has been established for the intrinsic/Lions derivative, see   \cite{BRW21, FH22, HS, HW2209, RZ25, RW19, W23, WRbook}. 
However, in comparison with the Bismut formula for the intrinsic/Lions derivative which naturally links to the Malliavin derivative, it seems harder 
to establish the   Bismut formula for the extrinsic derivative. As far as we know, the only result on this topic was derived by the second named author \cite{PPED},
where the noise is distribution-free. It has already been understood in the study of the intrinsic/Lions derivative that the distribution dependence of noise may cause additional
 difficulty. In this paper, we overcome  this difficulty by modifying the noise decomposition argument  in \cite{HW2209} to establish a Bismut formula for the extrinsic derivative of   SDEs and SPDEs with distribution dependent noise.

Let $\mathscr P$ be the set of all probability measures on $\R^d$. For any $k\in [0,\infty)$, the space of probability measures with finite k-th moment
$$  \mathscr{P}_k:=\big\{\mu\in \mathscr P:\ \mu(|\cdot|^k)<\infty\big\}$$
is a complete metric space  with respect to the weighted total variation
distance
\[
\|\mu-\nu\|_{k,\mathrm{var}}
:=
\sup_{|f|\le 1+|\cdot|^k}
|\mu(f)-\nu(f)|,
\]
see, for instance, \cite{CV}. In particular, $\mathscr{P}=\mathscr{P}_0$.

Following the line of  \cite{HW2209}, we consider the following distribution dependent SDE (DDSDE for short) on $\R^d$:
 \begin{equation}\label{EQ}
\d X_t
=
b_t(X_t,\L_{X_t})\,\d t
+
\lambda\,\d W_t
+
\sigma_t(\L_{X_t})\,\d B_t,
\qquad t\in[0,T],
\end{equation}
where $\L_{X_t}$ denotes the distribution of $X_t$,
  $W_t$ and $B_t$ are independent $d$-dimensional Brownian motions defined on
a complete filtered probability space
$(\Omega,\{\mathscr{F}_t\}_{t\in[0,T]},\mathscr{F},\P)$,
$\lambda\in\R$ is a constant, and for some $k\in [0,\infty)$ 
\[
b:[0,T]\times\R^d\times\mathscr{P}_k\to\R^d,
\qquad
\sigma:[0,T]\times\mathscr{P}_k\to\R^d\otimes\R^d
\]
are measurable. One may reformulate the sum of two noise terms as
$$\lambda\,\d W_t
+
\sigma_t(\L_{X_t})\,\d B_t=\ss{\ll^2+  (\si_t\si_t^*)(\L_{X_t})}\,\d \tt W_t,$$
where $\tt W_t:= \int_0^t (\ll^2 +(\si_s\si_s^*)(\L_{X_s}))^{-\ff 1 2}(\ll \d W_s+ \si_s(\L_{X_s})\d B_s)$ is $d$-dimensional Brownian motion. But the noise decomposition formulation enables us to establish the Bismut formula only using the additive noise $W_t$.

\begin{defn}\label{DEF}
The DDSDE \eqref{EQ} is said to be well-posed for distributions in $\mathscr{P}_k$ if,
for any $\mathscr{F}_0$-measurable initial random variable $X_0$ with
$\L_{X_0}\in\mathscr{P}_k$ (respectively, any initial distribution $\gamma\in\mathscr{P}_k$),
it admits a unique strong (respectively, weak) solution such that
\[
\L_{X_\cdot}\in C([0,T];\mathscr{P}_k).
\]
\end{defn}

When \eqref{EQ} is well-posed, it naturally induces a nonlinear Markov semigroup acting on
probability measures. For $\mu\in\mathscr{P}_k$ and $t\in(0,T]$, set
\[
P_t^*\mu
:=
\L_{X_t^\mu}
\quad\text{with}\quad
\L_{X_0}=\mu,
\]
where $X_t^\mu$ denotes the solution to \eqref{EQ} with initial distribution $\mu$.
It is well known that $P_t^*\mu$ solves the corresponding nonlinear Fokker-Planck equation, so to characterize the regularity of nonlinear Fokker-Planck equations
it is crucial to study the derivative of $P_t^*\mu$ with respect to $\mu\in \mathscr P_k$. 
 Since $P_t^*\mu$ is determined by
 $$P_tf(\mu):= \E\big[f(X^\mu_t)\big],\ \ \ f\in \B_b(\R^d),$$ 
 where $\B_b(\R^d)$ denotes the space of bounded Borel measurable functions on $\R^d$. Studying the regularity of 
\[
\mathscr{P}_k\ni\mu\mapsto P_t^*\mu\] is equivalent to studying that of 
\[
\mathscr{P}_k\ni\mu\mapsto P_t f(\mu),\ \ \ f\in \B_b(\R^d).
\]
In this paper, we calculate the extrinsic derivative of $P_tf(\mu)$ with respect to $\mu\in \mathscr P_k$.

Different notions of differentiability on $\mathscr{P}_k$ correspond to distinct
physical mechanisms in particle systems. The intrinsic derivative describes the
transport or motion of particles and is associated with perturbations induced by
spatial displacements along flows, while the extrinsic derivative reflects the
birth-death or creation-annihilation of particles; see, for instance,
\cite{KLV,PPWSG} and the references therein.

Extrinsic derivative formulas have been established for DDSDEs with distribution independent noise~\cite{PPED}. However, the case where the noise $\sigma$ depends on the distribution $\mathscr  L_{X_t}$
  remains  unexplored. This dependence implies a more complex coupling as the noise term varies with the distribution. In this paper, we address this issue by employing the noise decomposition method. To formulate our results, we first recall the notion of extrinsic differentiability on
$\mathscr{P}_k$.

\begin{defn}\label{DEF1}
Let $f$ be a continuous function on $\mathscr{P}_k$.
\begin{enumerate}
\item[(1)]
We say that $f$ is extrinsically differentiable if, for any
$\mu\in\mathscr{P}_k$, the convex derivative
\[
\tilde{D}_x^E f(\mu)
:=
\lim_{\varepsilon\downarrow0}
\frac{
f\big((1-\varepsilon)\mu+\varepsilon\delta_x\big)-f(\mu)
}{\varepsilon}
\in\R,
\quad x\in\R^d,
\]
exists.
\item[(2)]
We denote by $C^{E,1}(\mathscr{P}_k)$ the class of extrinsically differentiable
functions $f$ such that the map
\[
(x,\mu)\in\R^d\times\mathscr{P}_k
\longmapsto
\tilde{D}^E f(\mu)(x):=\tilde{D}_x^E f(\mu)
\]
is continuous.
\item[(3)]
We write $f\in C^{E,1}_K(\mathscr{P}_k)$ if $f\in C^{E,1}(\mathscr{P}_k)$ and, for any
compact set $\mathscr{K}\subset\mathscr{P}_k$, there exists a constant $c>0$ such that
\[
\sup_{\mu\in\mathscr{K}}
|\tilde{D}^E f(\mu)(x)|
\le
c\bigl(1+|x|^k\bigr),
\quad x\in\R^d.
\]
\end{enumerate}
\end{defn}


The extrinsic derivative in Definition~\ref{DEF1} is based on the perturbation
$\mu\mapsto(1-\varepsilon)\mu+\varepsilon\delta_x$, that is, by injecting an infinitesimal
mass at the spatial location $x$. This construction corresponds to a directional
derivative of $f$ along atomic perturbations of the measure and therefore captures
the first-order sensitivity of $f$ with respect to adding a particle at $x$.
This viewpoint is consistent with the
{linear functional derivative} with respect to the measure argument, which is also
frequently called the {flat derivative}  in the mean-field games
and statistical physics literature;  see, for instance,
\cite{Cardaliaguet18, Martini2023, Monmarche24}.

As shown in \cite[Lemma~3.2]{RW21}, if $f\in C_K^{E,1}(\mathscr{P}_k)$, then the map
$x\mapsto\tilde{D}_x^E f(\mu)$ coincides with the linear functional derivative (flat derivative)
of $f$ at $\mu$. Moreover, for any $\mu,\nu\in\mathscr{P}_k$, the following identity holds:
\begin{equation}\label{eq:flat-FTC}
f(\mu)-f(\nu)
=
\int_0^1
(\mu-\nu)\Bigl(\tilde{D}^E f\bigl(r\mu+(1-r)\nu\bigr)\Bigr)\,\mathrm{d}r,
\end{equation}
which can be viewed as a measure-valued version of the fundamental theorem of calculus along the
linear interpolation between $\mu$ and $\nu$.

The paper is organized as follows. In Section~2, we establish a Bismut type
formula for the extrinsic  derivative of  the DDSDE \eqref{EQ}. In Section~3, we
extend this formula to distribution dependent SPDEs.

Throughout the paper, $C$ (or $c$) denotes a generic nonnegative constant, possibly
differing from line to line.

\section{ SDEs with distribution dependent noise}

To calculate $\tilde{D}^E P_t f$, we use the semigroup associated with the decoupled SDE
\begin{equation}\label{EQ1}
\d X_t^{\mu,x}
=
b_t\big(X_t^{\mu,x}, P_t^{*}\mu\big)\,\d t
+
\lambda\,\d W_t
+
\sigma_t\big(P_t^{*}\mu\big)\,\d B_t,
\qquad X_0^{\mu,x}=x,\ t\in[0,T],
\end{equation}
where $x\in\mathbb{R}^d$ and $\mu\in\mathscr{P}_k$.
Let $P_t^{\mu}$ be the associated semigroup, i.e.
\[
P_t^{\mu}f(x):= \mathbb{E}\big[f(X_t^{\mu,x})\big],
\quad x\in\mathbb{R}^d,\ t\in[0,T].
\]
For a probability measure $\nu$ on $\mathbb{R}^d$, define
\[
P_t^{\mu}f(\nu):= \int_{\mathbb{R}^d} P_t^{\mu}f(x)\,\nu(\d x),
\qquad
(P_t^{\mu})^{*}\nu := \int_{\mathbb{R}^d} \mathscr{L}_{X_t^{\mu,x}}\,\nu(\d x).
\]

We impose the following assumptions.

\begin{enumerate}
\item[\textbf{(H)}]
Let $k\in[1,\infty)$. The following conditions hold for some constant $K\in (0,\infty)$.

\medskip

\noindent\textbf{(H1)}
For all $t\in[0,T]$, $x,y\in\mathbb{R}^d$ and $\mu,\nu\in\mathscr{P}_k(\mathbb{R}^d)$,
\begin{equation}\label{hh2}
\begin{aligned}
|b_t(x,\mu)-b_t(y,\nu)| &\le K\big(|x-y|+\|\mu-\nu\|_{k,\mathrm{var}}\big),\\
\|\sigma_t(\mu)-\sigma_t(\nu)\| &\le K\,\mathbb{W}_k(\mu,\nu),\\
\|\sigma_t(\delta_0)\| +|b_t(0,\delta_0)|&\le K.\\
\end{aligned}
\end{equation}

\medskip

\noindent\textbf{(H2)}
$b_t(x,\mu)$ and $\sigma_t(\mu)$ belong to $C^{E,1}(\mathscr{P}_k)$.
Moreover, for all $t\in[0,T]$, $y\in\mathbb{R}^d$, $\mu\in\mathscr{P}_k$,
\begin{equation}\label{hh3}
\inf_{c\in\mathbb{R}^d}
\Big(
|\tilde{D}^E b_t(x,\mu)(y)-c|
+
\|\tilde{D}^E \sigma_t(\mu)(y)-c\|
\Big)
\le
K\,(1+|y|^k).
\end{equation}

\medskip

\noindent\textbf{(H3)}
There exists an increasing function $\alpha:(0,\infty)\to(0,\infty)$ with $\alpha(\varepsilon)\to0$ as $\varepsilon\to0$ such that for all $t\in[0,T]$, $x,y\in\mathbb{R}^d$ and $\mu,\nu\in\mathscr{P}_k$,
\begin{equation}\label{hh4}
\begin{aligned}
&\big|
\tilde{D}^E b_t(x,\mu)(y)
-
\tilde{D}^E b_t(x,\nu)(y)
\big|
+
\big\|
\tilde{D}^E \sigma_t(\mu)(y)
-
\tilde{D}^E \sigma_t(\nu)(y)
\big\|
\\
&\qquad\le
\alpha\big(\|\mu-\nu\|_{k,\mathrm{var}}\big)
\big(
1 + |y|^k + \mu(|\cdot|^k) + \nu(|\cdot|^k)
\big).
\end{aligned}
\end{equation}

\end{enumerate}

\begin{rem}
As illustrated by \emph{\cite[Example~1.1]{DH}}, the well-posedness of \eqref{EQ1} may fail if   $\mathbb{W}_k$ is replaced by the weighted variation distance $\|\cdot\|_{k,\mathrm{var}}$ in condition \eqref{hh2}.

On the other hand, by \emph{\cite[Lemma~2.1]{PPED}},    \eqref{hh3} implies 
\begin{equation*}
\begin{aligned}
|b_t(x,\mu)-b_t(x,\nu)|
&\le C\,\|\mu-\nu\|_{k,\mathrm{var}}, \\[0.5ex]
\|\sigma_t(\mu)-\sigma_t(\nu)\|
&\le C\,\|\mu-\nu\|_{k,\mathrm{var}},
\end{aligned}
\end{equation*}
for all $t\in[0,T]$, $x\in\mathbb{R}^d$ and $\mu,\nu\in\mathscr{P}_k$.  Since    $\mathbb{W}_k$ and $\|\cdot\|_{k,\mathrm{var}}$ are not comparable for $k>1$,   conditions      \eqref{hh2} and  \eqref{hh3} are not comparable as well. 

\end{rem}

Let $\mathscr{D}_k$ denote the class of measurable functions $f$ on $\R^d$
such that $|f(x)|\le c(1+|x|^k)$ for some constant $c>0$. Let $\mathscr{M}_T$ be the set of all measurable maps
\[
\eta = (\eta_{s,t})_{0 \leq s \leq t \leq T}: \{(s,t): 0 \leq s \leq t \leq T\} \times \Omega \to \mathbb{R}^d
\]
such that each $\eta_{s,t}$ is $\mathscr{F}_t$-measurable and
\[
\|\eta\|_{\mathscr{M}_T} := \sup_{t \in [0,T]} \left( \int_0^t \mathbb{E} |\eta_{s,t}|^2 ds \right)^{\frac{1}{2}}<\infty .
\]
The main result of this paper is the following.
\begin{thm}\label{MR} 
Assume {\bf (H)}. Then \eqref{EQ} is well-posed for any
initial distribution $\mu \in \mathscr{P}_k$, and we denote by $X_t^{\mu}$ the
(unique) solution to \eqref{EQ} with initial distribution $\mu$. Moreover, the
following assertions hold.

\begin{enumerate}
	\item[(1)] For any fixed $t>0$ and any $\mu,\nu\in \mathscr{P}_k$, $s \leq t$, there exists unique $\eta \in \mathscr{M}_T$ such that
	\begin{equation*}
	\eta^{\mu,\nu}_{s,t}
	=
	J_s
	+
	\Big\langle \nabla b(\cdot, P_{s}^{*}\mu)(X_{s}^{\mu}), H_s-\frac{s}{t}H_t \Big\rangle
	+
	\frac{1}{t}H_t,\quad 0 \leq s \leq t \leq T,
	\end{equation*}
	where
	\begin{equation*}
	\begin{aligned}
	J_s :=&\, P_{s}^{\mu}\Big(\tilde{D}^E b_{s}(X_{s}^{\mu},P^{*}_{s}\mu)(\mu) - \tilde{D}^E b_{s}(X_{s}^{\mu},P^{*}_{s}\nu)(\mu)\Big)\\
	& - \mathbb{E}\Big[\tilde{D}^E b_{s}(z, P_s^*\mu)(X_{s}^{\mu})
	\int_{0}^{s} \Big\langle \frac{1}{\lambda}\eta^{\mu,\nu}_{r,s},\d W_r \Big\rangle \Big]_{z=X_{s}},\\
	H_t :=&\, \int_{0}^{t} P_{r}^{\mu} \Big(\tilde{D}^E \sigma_{r}(P^{*}_{r}\mu)(\mu) - \tilde{D}^E \sigma_{r}(P^{*}_{r}\mu)(\nu)\Big)\,\d B_r \\
	& - \int_{0}^{t} \mathbb{E} \Big[\tilde{D}^E \sigma_{r}(P^{*}_r\mu)(X_{r}^{\mu}) \int_{0}^{r} \Big\langle \frac{1}{\lambda}\eta^{\mu,\nu}_{u,r},\d W_u \Big\rangle \Big]\,\d B_r.
	\end{aligned}
	\end{equation*}
	Consequently, there exists a constant $c>0$ such that
\begin{equation}\label{US}
\mathbb{E}\left|\eta_{s,t}^{\mu,\nu}\right|^2 
\leq c \left( 1 + \bigl(\mu + \nu\bigr)\bigl(| \cdot |^k\bigr) \right)^2
\exp\left\{\frac{c}{\lambda^2}\bigl( 1 + \mu\bigl(| \cdot |^k\bigr) \bigr)^2\right\},
\quad  t\in [0,T].
\end{equation}

	\item[(2)]  For any $f\in\mathscr{D}_k$ and  $t\in (0,T]$, $P_t f$ is extrinsically differentiable and 
	\begin{equation}\label{DENU}
	\tilde{D}^E_{\nu} P_t f(\mu)
	=
	\int_{\R^d} P_t^{\mu} f(x)\, (\nu-\mu)(\d x)
	+
	\mathbb{E}\Big[f(X_t^{\mu}) \int_0^t \Big\langle \frac{1}{\lambda}\eta_{s,t}^{\mu,\nu},\d W_s \Big\rangle \Big].
	\end{equation}
    As a consequence, we can find a constant $c > 0$ such that the following estimate holds:
	\begin{equation}\label{SED}
	\begin{aligned}
	\sup_{|f(x)|\le 1+|x|^k} |\tilde{D}^E_{\nu} P_t f(\mu)|
	\le &\, c \bigl(1 + \mu(|\cdot|^k)\bigr) \bigl(1 + (\mu + \nu)(|\cdot|^k)\bigr) 
	\exp\Big\{\frac{c}{\lambda^2 t}\bigl(1 + \mu(|\cdot|^k)\bigr)^2\Big\} t^{\frac{1}{2}} \\
	& + c \bigl(1 + (\mu + \nu)(|\cdot|^k)\bigr), 
	\quad t\in [0,T].
	\end{aligned}
	\end{equation}
\end{enumerate}
\end{thm}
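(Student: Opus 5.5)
Well-posedness comes from a fixed point on $C([0,T];\mathscr P_k)$. For a fixed curve $\gamma$, the decoupled SDE with coefficients $b_t(x,\gamma_t)$ and $\sigma_t(\gamma_t)$ has Lipschitz drift and additive noise, so it is strongly well-posed. Condition \eqref{hh2} together with the Remark (Lipschitz continuity in $\|\cdot\|_{k,\mathrm{var}}$ for $b$, and in $\mathbb W_k$ for $\sigma$) then makes the map $\gamma\mapsto \L_{X^\gamma_\cdot}$ a contraction under a time-weighted metric. We compare two solutions by a synchronous coupling to control $\mathbb W_k$, and use Girsanov with respect to $W$ (valid since $\lambda\neq0$) to control $\|\cdot\|_{k,\mathrm{var}}$ through the drift difference.

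For (1), set $\mu_\varepsilon=(1-\varepsilon)\mu+\varepsilon\nu$. The candidate $\eta$ is a fixed point of the affine map $\Phi$ on $\mathscr M_T$ defined by the right-hand side of the stated equation. Only $J_s$ and $H_t$ depend on $\eta$, and they do so linearly through the stochastic integrals $\int_0^r\langle\lambda^{-1}\eta_{u,r},\d W_u\rangle$. Write $I_r(\eta)$ for this integral. By Cauchy--Schwarz and It\^o's isometry,
$$\big|\E[\tilde D^E b_s(z,P_s^*\mu)(X_s^\mu)\,I_s(\eta)]\big|\le \lambda^{-1}\big(\E|\tilde D^E b_s(z,P_s^*\mu)(X_s^\mu)|^2\big)^{1/2}\Big(\int_0^s\E|\eta_{r,s}|^2\d r\Big)^{1/2}.$$
By \eqref{hh3}, after subtracting the constant $c$ (which does not change the expectation because $\E I_s=0$), the first factor is bounded by $C(1+\mu(|\cdot|^k))$, using moment bounds for $X_s^\mu$. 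The same estimate handles $H_t$ via the BDG inequality. The terms $H_s-\frac st H_t$ and $\frac1t H_t$ contribute similarly, with $\nabla b$ bounded by $K$.

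This gives
$$\E|\Phi(\eta)_{s,t}-\Phi(\eta')_{s,t}|^2\le \frac{C}{\lambda^2}(1+\mu(|\cdot|^k))^2\int_0^t \E|\eta_{r,\cdot}-\eta'_{r,\cdot}|^2\d r.$$
Under an exponentially weighted norm $\sup_t \e^{-\beta t}(\int_0^t\E|\eta_{s,t}|^2\d s)^{1/2}$, this makes $\Phi$ a contraction, which yields existence and uniqueness of $\eta$. A Gronwall iteration starting from $\eta=0$, whose inhomogeneous terms are bounded by $C(1+(\mu+\nu)(|\cdot|^k))$, gives \eqref{US} with the factor $\exp\{c\lambda^{-2}(1+\mu(|\cdot|^k))^2\}$.

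For (2), decompose
$$P_tf(\mu_\varepsilon)-P_tf(\mu)=\big[P_t^{\mu_\varepsilon}f(\mu_\varepsilon)-P_t^{\mu_\varepsilon}f(\mu)\big]+\big[P_t^{\mu_\varepsilon}f(\mu)-P_t^{\mu}f(\mu)\big].$$
The first bracket equals $\varepsilon\int P_t^{\mu_\varepsilon}f\,\d(\nu-\mu)$, and continuity of $P_t^{\mu_\varepsilon}f$ in $\varepsilon$ gives the first term of \eqref{DENU}. For the second bracket, we use the noise decomposition from \cite{HW2209}. Under $\mu_\varepsilon$ the curve changes only in $b$ and $\sigma$. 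We construct $X^\varepsilon$ driven by $W^\varepsilon=W-\int_0^\cdot\lambda^{-1}\varepsilon\,\eta^\varepsilon_{s,t}\d s$ together with the same $B$, choosing $\eta^\varepsilon$ so that $X^\varepsilon_t=X^\mu_t$. The difference in the $\sigma\,\d B$ term, namely $\varepsilon H$-type contributions, and the drift differences are absorbed into the $W$-direction; this is exactly where the term $\frac1tH_t$ and the interpolation $H_s-\frac stH_t$ appear, because they make the correction vanish at time $t$. Girsanov then gives $P_t^{\mu_\varepsilon}f(\mu)=\E[f(X_t^\mu)R^\varepsilon_t]$ with
$$R^\varepsilon_t=\exp\Big(\int_0^t\langle\tfrac{\varepsilon}{\lambda}\eta^\varepsilon_{s,t},\d W_s\rangle-\tfrac{\varepsilon^2}{2\lambda^2}\int_0^t|\eta^\varepsilon_{s,t}|^2\d s\Big).$$
Dividing by $\varepsilon$ and letting $\varepsilon\to0$ gives the second term of \eqref{DENU}. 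The self-referential terms in $J$ and $H$ come from differentiating $P_s^*\mu_\varepsilon$ by formula \eqref{eq:flat-FTC}, which involves $\tilde D^E_\nu P_s g(\mu)$ for $g=\tilde D^Eb_s(z,P_s^*\mu)$ and is itself the formula being proved. For this reason $\eta$ has to be defined by a fixed point, and consistency follows from the uniqueness in (1).

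The main obstacle is justifying this limit. It requires (H3) and moment estimates showing $\eta^\varepsilon\to\eta$ in $\mathscr M_T$, together with uniform integrability of $(R^\varepsilon_t-1)/\varepsilon$ against $f\in\mathscr D_k$; for the latter we use Hölder's inequality and exponential moments of $\int\langle\eta,\d W\rangle$, which are controlled for small $\varepsilon$. Finally, \eqref{SED} follows from $|f|\le1+|\cdot|^k$, the bound $\sup_x|P_t^\mu f(x)|/(1+|x|^k)\le C$, Cauchy--Schwarz, and \eqref{US}. We obtain the stated time dependence by scaling the $\frac1tH_t$ piece, which contributes $t^{-1}$ inside the exponent and $t^{1/2}$ in front.
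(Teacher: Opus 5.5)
Your route is the paper's: a fixed point for the flow using synchronous coupling plus Girsanov in $W$ conditioned on $B$, the affine map $\Phi$ on $\mathscr{M}_T$, the splitting of $P_tf(\pi_\varepsilon)-P_tf(\mu)$, the density $R^\varepsilon_t$ built from $\varepsilon\eta^\varepsilon$, and $\eta^\varepsilon\to\eta$ via an approximate fixed-point equation. The gap is in the key estimate for $\Phi$: the term $\frac1tH_t$ does \emph{not} ``contribute similarly''.

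To see this, set $\zeta=\eta-\eta'$, $g(r):=\int_0^r\mathbb{E}|\zeta_{u,r}|^2\,\mathrm{d}u$ and $a_r:=\mathbb{E}\big[\tilde D^E\sigma_r(P_r^*\mu)(X_r^\mu)\int_0^r\langle\lambda^{-1}\zeta_{u,r},\mathrm{d}W_u\rangle\big]$. The $\zeta$-part of $\frac1tH_t$ is $-\frac1t\int_0^ta_r\,\mathrm{d}B_r$. Its second moment is $t^{-2}\int_0^t|a_r|^2\,\mathrm{d}r\le c_0t^{-2}\int_0^tg(r)\,\mathrm{d}r$, with $c_0$ of order $\lambda^{-2}(1+\mu(|\cdot|^k))^2$, and it is the same for every $s\le t$. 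Integrating in $s$ therefore gives
\[
\int_0^t\mathbb{E}|\Phi(\eta)_{s,t}-\Phi(\eta')_{s,t}|^2\,\mathrm{d}s\le\frac{(1+\delta)c_0}{t}\int_0^tg(r)\,\mathrm{d}r+C_\delta\int_0^tg(r)\,\mathrm{d}r .
\]
Under the weight $\mathrm{e}^{-2\beta t}$, the first term produces the factor $(1+\delta)c_0\frac{1-\mathrm{e}^{-2\beta t}}{2\beta t}$. Its supremum over $t\in(0,T]$ is $(1+\delta)c_0$ for every $\beta$, so there is no contraction unless $c_0<1$.

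This is not an artefact of the weight. For $c_0\ge1$ the bounded function $g(t)=t^{c_0-1}$ solves $g(t)=\frac{c_0}{t}\int_0^tg(r)\,\mathrm{d}r$, and every constant $M\ge0$ satisfies $M\le A+\frac{c_0}{t}\int_0^tM\,\mathrm{d}r$. Hence neither uniqueness of $\eta$ nor \eqref{US} follows by Gronwall. The kernel $c_0/t$ depends on the upper limit, so your ``$t^{-1}$ inside the exponent'' is not justified either. The stability you need for $\eta^\varepsilon\to\eta$, and hence \eqref{DENU} and \eqref{SED}, rests on the same estimate.

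The paper does keep the factor $t^{-2}$ and tries to avoid $t=0$ with the truncated seminorm $\sup_{\varepsilon\le t\le T}$. However, the double integral $\int_0^t\mathrm{d}r\int_0^r$ still contains $r\in[0,\varepsilon]$, which that seminorm does not control; the paper silently replaces $\int_0^t\mathrm{d}r$ by $\int_\varepsilon^t\mathrm{d}r$. Its Gronwall step for \eqref{US}, from $f(t)\le A+\frac{\alpha}{t}\int_0^tf$ to $f\le A\mathrm{e}^{\alpha}$, has exactly the defect above. So copying it does not close the gap. You need an additional ingredient, for example:
\begin{itemize}
\item a smallness condition $c_0<1$, under which your exponentially weighted contraction works once the factor is kept; or
\item a structural argument removing the singularity. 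One possible route: when $y\mapsto\tilde D^E\sigma_r(P_r^*\mu)(y)$ is Lipschitz (automatic from (H1) if $k=1$), Malliavin integration by parts in $W$ shows that the contribution of $\frac1tH_t$ to the pairing $a_t$ evaluated at $\Phi(\eta)$, namely $\frac{1}{\lambda t}\mathbb{E}\big[\tilde D^E\sigma_t(P_t^*\mu)(X_t^\mu)\langle H_t,W_t\rangle\big]$, is $O(\|H_t\|_{L^2})$ rather than $O(t^{-1/2}\|H_t\|_{L^2})$.
\end{itemize}

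Two smaller points. First, the same small-time effect appears in your well-posedness step. Girsanov in $W$ absorbs the $\sigma$-mismatch only through the $B$-conditional drift $\frac{1}{t_0}\big(\xi^{\gamma^1}(t_0)-\xi^{\gamma^2}(t_0)\big)$, with $\xi^{\gamma}(t)=\int_0^t\sigma_s(\gamma_s)\,\mathrm{d}B_s$. This costs $\frac1{t}\int_0^t\mathbb{W}_k(\gamma^1_s,\gamma^2_s)^2\,\mathrm{d}s$, which no exponential weight makes small; the paper's factor $\big(\frac{1-\mathrm{e}^{-2\theta t}}{2t\theta}\big)^{1/2}$ tends to $1$ as $t\downarrow0$. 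Put a large relative weight on the $\mathbb{W}_k$-component of the metric, whose output is $O(\theta^{-1/k})$. Second, the bound $\big(\mathbb{E}|\tilde D^Eb_s(z,P_s^*\mu)(X_s^\mu)-c|^2\big)^{1/2}\le C(1+\mu(|\cdot|^k))$ requires $\mu\in\mathscr{P}_{2k}$. Instead, apply Cauchy--Schwarz conditionally on $\mathscr{F}_0$ and work with a norm controlling $\int_0^t\mathbb{E}\big[|\eta_{s,t}|^2\,\big|\,\mathscr{F}_0\big]\,\mathrm{d}s$ uniformly in $\omega$.
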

{{
\begin{rem}

To the best of our knowledge, this is the first time that a Bismut-type extrinsic
derivative formula is established for DDSDEs with distribution dependent diffusion
coefficients. The key novelty lies in the construction of the auxiliary process
$\eta^{\mu,\nu}_{s,t}$, which satisfies a closed stochastic integral equation and
admits the uniform estimate \eqref{US}. This representation enables us to derive
the explicit extrinsic derivative formula \eqref{DENU} together with the quantitative
estimate \eqref{SED}.
\end{rem}}}

\subsection{Decoupled SDE and stability estimates}

For any $\mu_{\cdot}\in C\left([0,T];\mathscr{P}_k\right)$, consider the SDE:
\begin{equation}\label{FrozenSDE}
\mathrm d X_t
= b_t(X_t,\mu_t)\,\mathrm dt
+ \lambda\,\mathrm d W_t
+ \sigma_t(\mu_t)\,\mathrm d B_t,\quad \mu_{\cdot}, \mu_{\cdot}\in\mathscr{C}_T^{\gamma},
t\in[0,T].
\end{equation}
Obviously, \eqref{FrozenSDE} is well-posed due to  {(\textbf{H1})}, and we can derive the following estimates.

\begin{lem}\label{lem:lip-estimate}
Assume \emph{(\textbf{H1})} holds and let $p>k\ge 1$. Then there exists a constant $C>0$
such that for any $\mathscr F_0$-measurable $\mathbb R^d$-valued random variable $X_0$
with $\mathbb E|X_0|^k<\infty$, and any $\mu^i,\nu^i\in C\!\left([0,T];\mathscr P_k\right)$,
$i=1,2$, the following estimates hold:
\[ \begin{aligned} \mathbb{W}_k\!\Big(\mathscr L_{X_t^{1}}, \mathscr L_{X_t^{2}}\Big)^k &\le C \left( \begin{aligned} &\int_0^t \|\mu_s^1-\mu_s^2\|_{k,\mathrm{var}}^k \,\mathrm{d}s + \int_0^t \mathbb{W}_k(\mu_s^1,\mu_s^2)^k \,\mathrm{d}s \end{aligned} \right), \quad t\in[0,T], \\ \|\mathscr L_{X_t^{1}} - \mathscr L_{X_t^{2}}\|_{k,\mathrm{var}} &\le C \left(1 + \mathbb{E}|X_0|^p + \int_0^t \mu_s^1(|\cdot|^k)^{\frac{p}{k}}\,\mathrm{d}s \right)^{\frac{k}{p}} \\ &\qquad \times \left( \begin{aligned} &\int_0^t \|\mu_s^1-\mu_s^2\|_{k,\mathrm{var}}^2 \,\mathrm{d}s + \frac{1}{t}\int_0^t \mathbb{W}_k(\mu_s^1,\mu_s^2)^2 \,\mathrm{d}s \end{aligned} \right)^{\frac12}, \quad t\in[0,T], \end{aligned} 
\]
Here $X^i=(X_t^i)_{t\in[0,T]}$ is the solution to \eqref{FrozenSDE} with initial value $X_0$
corresponding to $\mu^i$, for $i=1,2$.

\end{lem}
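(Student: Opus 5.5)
For the first estimate I use the synchronous coupling. Both $X^1$ and $X^2$ are driven by the same $W,B$ and start from the same $X_0$, so
\[
X_t^1-X_t^2=\int_0^t\big(b_s(X_s^1,\mu_s^1)-b_s(X_s^2,\mu_s^2)\big)\,\d s+\int_0^t\big(\sigma_s(\mu_s^1)-\sigma_s(\mu_s^2)\big)\,\d B_s .
\]
The $\lambda W$ terms cancel. By (\textbf{H1}) the drift difference is bounded by $K(|X_s^1-X_s^2|+\|\mu_s^1-\mu_s^2\|_{k,\mathrm{var}})$. The diffusion difference is deterministic with norm at most $K\mathbb W_k(\mu_s^1,\mu_s^2)$. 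Taking $\E|\cdot|^k$ and applying Hölder to the $\d s$ integral gives $\E\big|\int_0^t\cdots\d s\big|^k\le C\int_0^t(\E|X^1_s-X^2_s|^k+\|\mu^1_s-\mu^2_s\|^k_{k,\mathrm{var}})\,\d s$. The stochastic integral is Gaussian because its integrand is deterministic, so its $k$-th moment is $C(\int_0^t\|\cdot\|^2\d s)^{k/2}\le C\int_0^t\mathbb W_k(\mu^1_s,\mu^2_s)^k\d s$ by Hölder on $[0,T]$ (using $k\ge 2$), or by BDG for $k\in[1,2)$. Gronwall then gives the bound for $\E|X_t^1-X_t^2|^k$, and $\mathbb W_k(\mathscr L_{X_t^1},\mathscr L_{X_t^2})^k\le\E|X_t^1-X_t^2|^k$ finishes it.

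For the weighted variation estimate I use a Girsanov change of measure that turns the coupling into an equality in law. Let $Y_t$ solve $\d Y_t=b_t(Y_t,\mu^1_t)\d t+\lambda\d W_t+\sigma_t(\mu^1_t)\d B_t+\xi_t\d t$ with $Y_0=X_0$. Choose the control
\[
\xi_t=b_t(X_t^2,\mu_t^2)-b_t(X_t^2,\mu_t^1)+\tfrac1t\big(X^2_0-Y_0\big)\cdots
\]
More concretely, the aim is to force $Y_t=X^2_t$ at time $t$ while compensating the diffusion mismatch $\int_0^s(\sigma_r(\mu^2_r)-\sigma_r(\mu^1_r))\d B_r$ through $W$. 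Set $\Delta_s:=\int_0^s(\sigma_r(\mu^2_r)-\sigma_r(\mu^1_r))\d B_r$ and define the additive-noise shift
\[
\d\tilde W_s=\d W_s-\tfrac1\lambda\Big(b_s(X^2_s,\mu^2_s)-b_s(X^2_s,\mu^1_s)+\tfrac1t\Delta_t\Big)\d s .
\]
Under the Girsanov measure $\Q$, the process $X^2$ solves the frozen equation with $\mu^1$ driven by $(\tilde W,B)$, up to the term $\Delta_s-\tfrac st\Delta_t$. That term vanishes at $s=t$. The difficulty is that it enters the drift through $b(X,\mu^1)$ in a Lipschitz way, so it has to be absorbed into the shift as well, following the noise decomposition idea of \cite{HW2209}. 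Using the interpolating process $Z_s=X^2_s-(\Delta_s-\tfrac st\Delta_t)$, I include $b_s(X_s^2,\mu^1_s)-b_s(Z_s,\mu^1_s)$ in the shift. This makes $Z$ under $\Q$ a solution of the $\mu^1$-equation, so $\mathscr L_{X_t^1}=\mathscr L_{Z_t}|_\Q=\mathscr L_{X^2_t}|_\Q$ because $Z_t=X^2_t$.

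The two laws then differ only through the density. Writing $R=\d\Q/\d\P$, I get
\[
|\E f(X^1_t)-\E f(X^2_t)|=|\E[f(X_t^2)(R-1)]|\le\big(\E(1+|X^2_t|^k)^{p/k}\big)^{k/p}\big(\E|R-1|^{q}\big)^{1/q},
\]
with $q=p/(p-k)$. The moment factor is controlled by standard $p$-th moment bounds, which produce $(1+\E|X_0|^p+\int_0^t\mu_s^1(|\cdot|^k)^{p/k}\d s)^{k/p}$ via the linear growth from (\textbf{H1}). The main obstacle is bounding $\E|R-1|^q$ by the square root of $\int_0^t\E|\text{shift}|^2\d s$. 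The squared shift has expectation at most $C\|\mu^1_s-\mu^2_s\|^2_{k,\mathrm{var}}+C\E|\Delta_t|^2/t^2+C\E|\Delta_s|^2$, and $\E|\Delta_t|^2\le C\int_0^t\mathbb W_k^2\d s$, which gives exactly the $\frac1t\int_0^t\mathbb W_k^2$ term. The shift is not bounded, since $\Delta$ is Gaussian. I would therefore bound $\E|R-1|^q$ through the inequality $|R-1|\le|\int\langle\theta,\d W\rangle|\,(1+R)\cdots$, combined with exponential moments of the Gaussian $\Delta$. These are finite uniformly because $\int\|\sigma(\mu^1)-\sigma(\mu^2)\|^2\le C$ by linear growth, possibly after a localization that truncates $\theta$ and then removes the truncation. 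Since $\|\cdot\|_{k,\mathrm{var}}$ is bounded on the relevant set, the result should only need to hold in the small-difference regime; when the right-hand side is large the estimate is trivial because both measures have bounded $k$-th moments.
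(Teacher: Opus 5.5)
Your overall strategy is the paper's: synchronous coupling plus Gronwall for $\mathbb W_k$, and for $\|\cdot\|_{k,\mathrm{var}}$ a process that coincides with one of the solutions at the terminal time after the noise mismatch is compensated linearly in time ($\Delta_s-\frac{s}{t}\Delta_t$), followed by a Girsanov transform in $W$. Your $Z$ is the mirror image of the paper's $Y$.

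The genuine gap is the Girsanov step itself. Your shift $\theta$ contains $\Delta_t/t$, and also $b_s(Z_s,\mu^1_s)$ with $Z_s$ depending on $\Delta_t$, i.e.\ on the path of $B$ up to time $t$. So $\theta$ is not adapted, $\int_0^t\langle\theta_s,\mathrm dW_s\rangle$ is not an It\^o integral, and Girsanov's theorem is not available under $\mathbb P$.

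The missing idea, which the paper uses, is to condition on $(B,\mathscr F_0)$ (equivalently, to enlarge the filtration by $\sigma(B_r:r\le T)$). Under $\mathbb P^B$:
\begin{itemize}
\item $W$ is still a Brownian motion, being independent of $(B,\mathscr F_0)$;
\item $\Delta$ is deterministic;
\item $Z-\int_0^\cdot\sigma_r(\mu^1_r)\,\mathrm dB_r$ solves an SDE with Lipschitz drift and additive noise $\lambda\tilde W$, and weak uniqueness identifies the conditional laws;
\item $|\theta_s|\le\bar\theta_s:=K\|\mu^1_s-\mu^2_s\|_{k,\mathrm{var}}+K|\Delta_s-\frac{s}{t}\Delta_t|+|\Delta_t|/t$, which is $B$-measurable, so Novikov's condition holds and no truncation is needed.
\end{itemize}
Separately, you put the density on $f(X^2_t)$. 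Your moment factor is then $\mathbb E(1+|X^2_t|^k)^{p/k}$, which is controlled by $\mu^2$, not by $\mu^1$ as in the statement. The coupled process has to be built from $X^1$, as the paper's $Y$ is (drift $b_t(X^1_t,\mu^1_t)$ plus the constant correction, noise $\sigma_t(\mu^2_t)\,\mathrm dB_t$). This yields $\mathbb E^B f(X^2_{t_0})=\mathbb E^B[Rf(X^1_{t_0})]$.

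The step you call the main obstacle is left open, and the way out you suggest does not work. Conditionally on $B$,
\[
(\mathbb E^B|R-1|^q)^{1/q}\lesssim(\Theta^{1/2}+\Theta)\,\mathrm e^{c\Theta},\qquad \Theta:=\lambda^{-2}\int_0^t\bar\theta_s^2\,\mathrm ds\ge\frac{|\Delta_t|^2}{\lambda^2t}.
\]
Since $\Delta_t$ is Gaussian, $\mathbb E\,\mathrm e^{c\Theta}<\infty$ requires $\frac{1}{\lambda^2t}\int_0^t\|\sigma_s(\mu^1_s)-\sigma_s(\mu^2_s)\|^2\,\mathrm ds$ to be small. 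A mere bound on $\int_0^t\|\sigma_s(\mu^1_s)-\sigma_s(\mu^2_s)\|^2\,\mathrm ds$ is therefore not enough, and linear growth only gives such a bound in terms of the moments of $\mu^i$ anyway.

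Nor is the regime of large differences trivial: there the left-hand side involves moments of $X^2_t$, i.e.\ of $\mu^2$, which the right-hand side does not see. In fact, for $k>1$ the second estimate cannot hold with $C$ independent of $\mu^1,\mu^2$. Take $b_t(x,\mu)=\mu(1+|\cdot|^k)e_1$, $\sigma\equiv0$, $\lambda=1$, $X_0=0$, $\mu^1\equiv\delta_0$, $\mu^2\equiv\delta_y$. Testing with $f=1+|\cdot|^k$ and using Jensen, the left-hand side is at least $t^k(1+|y|^k)^k-C$. The right-hand side is only $O(\sqrt t\,|y|^k+|y|)$ as $|y|\to\infty$. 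Taking $\mu^1\equiv\delta_z$ in the same example shows that the moment bound with exponent $p/k$, which you and \eqref{M1} use, also fails for $k>1$: under (\textbf{H1}) the drift may grow like $\mu(|\cdot|^k)$.

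So the estimate must be localized, with $C$ depending on a bound for $\sup_s(\mu^1_s+\mu^2_s)(|\cdot|^k)$. This is all that Lemma~\ref{WP1} uses on $\mathscr C^\gamma_N$, and in that setting your small/large splitting does work. The paper's estimate leading to \eqref{Rterm} simply drops the factor $\mathrm e^r+1$, so it does not settle this point either.

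Finally, for $k\in[1,2)$ BDG only yields $\big(\int_0^t\mathbb W_k(\mu^1_s,\mu^2_s)^2\,\mathrm ds\big)^{k/2}$, which is not bounded by $C\int_0^t\mathbb W_k(\mu^1_s,\mu^2_s)^k\,\mathrm ds$. The first estimate is in fact false there. Take $b\equiv0$, $\sigma_t(\mu)=\min\{\mu(|\cdot|^k)^{1/k},1\}I_d$, $X_0=0$, $\mu^1\equiv\delta_0$, $\mu^2\equiv\delta_{e_1}$. Then $\mathbb W_k(\mathscr L_{X^1_t},\mathscr L_{X^2_t})^k\ge c\,t^{k/2}$, against a right-hand side of order $t$. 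The paper's It\^o--Young argument likewise needs $k\ge2$.
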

\begin{proof}

For $i=1,2$, let $X^i=(X_t^i)_{t\in[0,T]}$ be solutions to
\begin{align*}
\mathrm d X_t^{1}
&=
b_t(X_t^{1},\mu_t^1)\,\mathrm dt
+\lambda\,\mathrm dW_t
+\sigma_t(\mu_t^1)\,\mathrm dB_t,
\quad X_0^{1}=X_0,\\
\mathrm d X_t^{2}
&=
b_t(X_t^{2},\mu_t^2)\,\mathrm dt
+\lambda\,\mathrm dW_t
+\sigma_t(\mu_t^2)\,\mathrm dB_t,
\quad X_0^{2}=X_0 .
\end{align*}
Subtracting the above two equations, 
applying It\^o's formula to 
$|X_t^{1}-X_t^{2}|^k$ 
and taking expectation, we obtain
\begin{align*}
\mathbb E|X_t^{1}-X_t^{2}|^k
&\le
k\int_0^t
\mathbb E\!\left[
|X_t^{1}-X_t^{2}|^{k-2}
\Big\langle
X_t^{1}-X_t^{2},
b_s(X_t^{1},\mu_s^1)-b_s(X_t^{2},\mu_s^2)
\Big\rangle
\right]\mathrm ds \notag\\
&\quad
+\frac{k(k-1)}{2}
\int_0^t
\mathbb E\!\left[
|X_t^{1}-X_t^{2}|^{k-2}
\|\sigma_s(\mu_s^1)-\sigma_s(\mu_s^2)\|^2
\right]\mathrm ds .
\end{align*}
By assumption \textbf{(H1)}, and applying Young's inequality, we obtain
\[
\mathbb E|X_t^{1}-X_t^{2}|^k
\le
c_1
\int_0^t
\mathbb E|X_t^{1}-X_t^{2}|^k\,\mathrm ds
+
c_1
\int_0^t
\Big(
\|\mu_s^1-\mu_s^2\|_{k,\mathrm{var}}^k
+
\mathbb W_k(\mu_s^1,\mu_s^2)^k
\Big)\mathrm ds,
\]
for a constant $c_1>0$. The linear growth condition in \eqref{hh2} ensures $\mathbb{E}|X_s^{i}|^k<\infty$, hence $\mathbb E|X_t^{1}-X_t^{2}|^k$ is finite.  
Applying Gronwall's inequality yields
\[
\mathbb E|X_t^{1}-X_t^{2}|^k
\le
\mathrm e^{c_1t}
\Big(
\int_0^t
\|\mu_s^1-\mu_s^2\|_{k,\mathrm{var}}^k\,\mathrm ds
+
\int_0^t
\mathbb W_k(\mu_s^1,\mu_s^2)^k\,\mathrm ds
\Big).
\]
Finally, since
\[
\mathbb W_k\!\left(
\mathscr L_{X_t^{1}},
\mathscr L_{X_t^{2}}
\right)^k
\le
\mathbb E|X_t^{1}-X_t^{2}|^k,
\]
the proof for the ${L}^k$-Wasserstein distance estimate is complete.

For the weighted total variation estimate, we use the conditional probability and conditional expectation given $\mathscr{F}_0$ and Brownian motion $B$:
\[
\mathbb{P}^{B} := \mathbb{P}(\,\cdot\, \mid B, \mathscr{F}_0), \quad \mathbb{E}^{B} := \mathbb{E}(\,\cdot\, \mid B, \mathscr{F}_0).
\]
For any $t \in [0,T]$, $\mu \in \mathscr{P}_k$, and $f \in \mathscr{B}_b(\mathbb{R}^d)$, let
\[
P_t^{B}f(X_0^\mu) := \mathbb{E}^{B}[f(X_t^{\mu})] = \mathbb{E}\big[f(X_t^{\mu}) \big| B, \mathscr{F}_0\big],
\]
so that
\begin{equation}
P_t f(\mu) = \mathbb{E}\big[P_t^{B}f(X_0^\mu)\big], \quad t \in [0,T], \mu \in \mathscr{P}_k, f \in \mathscr{B}_b(\mathbb{R}^d).
\end{equation}
Next, for any $\mu_{\cdot}\in C([0,T];\mathscr{P}_k)$, define
\begin{equation}\label{XI}
\xi ^{\mu}(t):=\int_0^{t}{\sigma_s(\mu_s)} \mathrm d B_s,\ \ t\in [0,T].
\end{equation}
For a fixed $t_0\in(0,T]$, consider the SDE:
\begin{equation}\label{YEQ}
\begin{aligned}
\mathrm d {Y_t} &= b_t(X^{1}_t,\mu^1_t)\mathrm d t + \frac{\xi^{\mu^1}(t_0)-\xi^{{\mu^2}}(t_0)}{t_0} \mathrm d t\\
&\quad+\lambda\mathrm d W_t+\sigma_t(\mu^2_t)\mathrm d B_t,\ \  {Y_0}={X}_0, \ t\in[0,t_0].
\end{aligned}
\end{equation}
Since
\begin{equation*}
\begin{aligned}
\mathrm d {X^{1}_t} &=b_t(X^{1}_t,\mu^1_t)\mathrm d t +\lambda\mathrm d W_t+\sigma_t(\mu^1_t)\mathrm d B_t,\ \ t\in[0,t_0].
\end{aligned}
\end{equation*}
This together with \eqref{XI} and \eqref{YEQ} yields
\begin{equation}\label{YXEQ}
\begin{aligned}
{Y_t}-X^{1}_t&=\frac{t}{t_0}\left[\xi^{\mu^1}(t_0)-\xi^{{\mu^2}}(t_0)\right]+\xi^{\mu^2}(t)-\xi^{\mu^1}(t),\ \ t\in[0,t_0].
\end{aligned}
\end{equation}
Let
\begin{equation*}
\begin{aligned}
\phi(t):=&b_t(X^{1}_t,\mu^1_t)-b_t({Y_t},\mu^2_t)+\frac{1}{ t_0}\left[\xi^{\mu^1}(t_0)-\xi^{\mu^2}(t_0)\right],\ \ t\in[0,t_0].
\end{aligned}
\end{equation*}
According to {\bf(H1)}, 
\begin{equation}\label{PH2}
\begin{aligned}
|\phi(t)|^2&\leq \frac{1}{t^2_0}\left|\xi^{\mu^1}(t_0)-\xi^{{\mu^2}}(t_0)\right|^2 +2 K^2\|\mu^1_t-\mu^2_t\|_{k,\mathrm{var}}^2
 +2 K^2 |{Y_t}-X^{1}_t|^2\\
 &\leq\frac{1+2K^2t^2}{t^2_0}\left|\xi^{\mu^1}(t_0)-\xi^{\mu^2}(t_0)\right|^2+4 K^2\left|\xi^{\mu^1}(t)-\xi^{\mu^2}(t)\right|^2\\
 &\quad+4 K^2\|\mu^1_t-\mu^2_t\|_{k,\mathrm{var}}^2 ,\ \ t\in[0,t_0].
\end{aligned}
\end{equation}
It\^o isometry and {\bf(H1)} yield
\begin{equation}\label{XX}
\begin{aligned}
&\ \ \ \ \mathbb{E} \left|\xi^{\mu^1}(t)-\xi^{\mu^2}(t)\right|^2 \leq K^2\int_0^t \mathbb{W}_k(\mu_s^1,\mu_s^2)^2 \mathrm ds.
\end{aligned}
\end{equation}
Let $\mathrm d\mathbb Q^{B}:= R\mathrm d\mathbb P^{B}$, where
\begin{equation*}
R:=\exp\left(\int_{0}^{t_0}\langle\frac{1}{\lambda}\phi(s),\mathrm d W_s \rangle-\frac{1}{2}\int_{0}^{t_0}|\frac{1}{\lambda}\phi(s)|^2\mathrm d s\right).
\end{equation*}
By Girsanov's theorem, under the weighted conditional probability $\mathbb Q^{B}$,
\[
\tilde{W}_t:=W_t-\int_{0}^{t}\frac{1}{\lambda}\phi(s)\mathrm d s,\ \ t\in[0,t_0]
\]
is a $d$-dimensional Brownian motion. By \eqref{YEQ}, $\hat{Y}_t := Y_t - \xi^{\mu^2}(t), t \in [0, t_0]$ satisfies the SDE:
\[
\mathrm{d}\hat{Y}_t = b_t( \hat{Y}_t + \xi^{\mu^2}(t), \mu_t^2)\mathrm{d}t + \lambda \mathrm{d}\tilde{W}_t,  \quad t \in [0, t_0].
\]
Let
\[
\hat{X}_t := X_t^{2} - \xi^{\mu^2}(t), \quad t \in [0, t_0].
\]
Then we have
\[
\mathrm{d}\hat{X}_t = b_t( \hat{X}_t + \xi^{\mu^2}(t), \mu_t^2)\mathrm{d}t + \lambda \mathrm{d}W_t, \quad \hat{X}_0 = {X}_0,  t \in [0, t_0].
\]
From weak uniqueness, we know that
\[
\mathscr{L}_{\hat{Y}_{t_0} \mid \mathbb{Q}^{B}} = \mathscr{L}_{\hat{X}_{t_0} \mid \mathbb{P}^{B}}.
\]
Since $\xi^{\mu^2}(t_0)$ is deterministic given $B$, we can obtain that
\[
\mathscr{L}_{{Y}_{t_0} \mid \mathbb{Q}^{B}} = \mathscr{L}_{\hat{Y}_{t_0} + \xi^{\mu^2}(t_0) \mid \mathbb{Q}^{B}} = \mathscr{L}_{\hat{X}_{t_0} + \xi^{\mu^2}(t_0) \mid \mathbb{P}^{B}} = \mathscr{L}_{X^{2}_{t_0} \mid \mathbb{P}^{B}} .
\]
Moreover, from \eqref{YXEQ}, we know that $X^{ 1}_{t_0} = Y_{t_0}$, and therefore,
\[
\mathbb{E}^{B}\left[f(X^{2}_{t_0})\right] = \mathbb{E}_{\mathbb{Q}^{B}}\left[f(Y_{t_0})\right] = \mathbb{E}^{B}\left[Rf(X^{1}_{t_0})\right], \quad f \in \mathscr{B}_b\left(\mathbb{R}^d\right).
\]
Weak uniqueness and H\"{o}lder's inequality yield
\begin{equation}\label{Kvar}
\begin{aligned}
&\ \ \ \ \|\mathscr{L}_{X_t^{1}}-\mathscr{L}_{X_t^{2}}\|_{k,\mathrm{var}}\\
&= \sup_{|f|\leq 1 + |\cdot|^k}\mathbb{E}\left[\mathbb{E}^{B}[f(X^{1}_{t_0})(R-1)]\right]\\
&\leq \mathbb{E}\left[\left(\mathbb{E}^{B}(1+|X^1_{t_0}|^p)\right)^{\frac{k}{p}}\left(\mathbb{E}^{B}(|R-1|^{\frac{p}{p-k}})\right)^{\frac{p-k}{p}}\right].
\end{aligned}
\end{equation}
By It\^o's formula for \( (1 + |X^1_{t}|^k)^{\frac{p}{k}} \), we can find a constant $c_2>0$ such that
	 \beq\label{M1} \E[|X_{t}^{1}|^p]\leq c_2\left(1+\mathbb{E}|X_0|^p+\int^t_0\mu^1_s(|\cdot|^k)^{\frac{p}{k}}\d s\right),\ \ t\in[0,T]. \end{equation}
Then there exists a constant $c_3(p)>0$ such that
\begin{equation}\label{eq:moment}
\begin{split}
\left(\mathbb{E}^B\big(1+|X_{t}^{1}|^p\big)\right)^{\frac{k}{p}}
&\le
c_3(p)\Bigg(
1+\mathbb{E}|X_0|^p
+\int_0^t \mu^1_s(|\cdot|^k)^{\frac{p}{k}}\,\mathrm{d}s\Bigg)^{\frac{k}{p}} .
\end{split}
\end{equation}
Using the inequality $|\mathrm e^r-1|\le(\mathrm e^r+1)|r|$ together with H\"{o}lder's inequality, we can find a constant $c_4>0$ such that
\begin{align*}
\mathbb{E}^B|R-1|^{\frac{p}{p-k}}
\le
c_4\,\mathbb{E}^B\!\left[
\left|
\int_0^{t_0}\Big\langle \tfrac{1}{\lambda}\phi(s),\mathrm dW_s\Big\rangle
\right|^{\frac{p}{p-k}}
+
\left(
\int_0^{t_0}\Big|\tfrac{1}{\lambda}\phi(s)\Big|^2\mathrm ds
\right)^{\frac{p}{2(p-k)}}
\right]
\end{align*}
 Applying the BDG's inequality yields
\begin{equation}\label{Rterm}
\left(\mathbb{E}^B(|R-1|^{\frac{p}{p-k}})\right)^{\frac{p-k}{p}}
\le
\frac{c_5}{\lambda}
\left(
\mathbb{E}^B\int_0^{t_0}|\phi(s)|^2\mathrm ds
\right)^{\frac{1}{2}}
\end{equation}
for some constant $c_5>0$. By \eqref{PH2} and \eqref{XX}, we have
\begin{align}\label{eq:phi}
\mathbb{E}\int_0^{t_0}|\phi(s)|^2\mathrm ds
\le
\left( \frac{K^2}{t_0} + \frac{8K^4 t_0}{3} \right) \int_0^{t_0} \mathbb{W}_k(\mu_s^1, \mu_s^2)^2 \mathrm{d}s
+ 2K^2 \int_0^{t_0} \mathbb{E} \|\mu_s^1 - \mu_s^2\|_{k,\mathrm{var}}^2 \mathrm{d}s.
\end{align}
Substituting \eqref{eq:moment}, \eqref{Rterm} and \eqref{eq:phi} into \eqref{Kvar} completes the proof.
\end{proof}
\subsection{Well-posedness of \eqref{EQ}}
\begin{lem}\label{WP1} Assume {\bf (H)}.Then the DDSDE \eqref{EQ} is well-posed for distributions in $\mathscr{P}_k$, and there
	exists a constant $C>0$  such that for any $\mu\in\mathscr{P}_k$,
	 \begin{equation*}
	 \E[|X_t^{\mu}|^p|\F_0]\leq C(1+|X_0^{\mu}|^p+\mu(|\cdot|^k)^{\frac{p}{k}}). 
	 \end{equation*}
\end{lem}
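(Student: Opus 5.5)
We prove well-posedness by a fixed point argument on the path space of marginals, using Lemma \ref{lem:lip-estimate} as the contraction estimate. Fix an $\mathscr F_0$-measurable $X_0$ with $\mu:=\mathscr L_{X_0}\in\mathscr P_k$. For $\mu_\cdot\in C([0,T];\mathscr P_k)$ with $\mu_0=\mu$, let $X^{\mu_\cdot}$ solve the frozen SDE \eqref{FrozenSDE} with initial value $X_0$. This SDE is well-posed by (\textbf{H1}), since the drift is Lipschitz in $x$ and the noise is additive given $\mu_\cdot$. Define $\Phi(\mu_\cdot)_t:=\mathscr L_{X^{\mu_\cdot}_t}$.

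\textbf{Step 1: $\Phi$ maps the space into itself.} By (\textbf{H1}) and \cite[Lemma~2.1]{PPED}, $|b_t(x,\mu_t)|\le K|x|+C(1+\mu_t(|\cdot|^k))$ and $\|\sigma_t(\mu_t)\|\le C(1+\mu_t(|\cdot|^k)^{1/k})$. Applying It\^o's formula to $(1+|X_t|^2)^{k/2}$ and using Gronwall's inequality gives $\sup_t\mathbb E|X_t|^k<\infty$. Continuity of $t\mapsto \mathscr L_{X_t}$ in $\mathscr W_k$ follows from path continuity together with uniform integrability. Continuity in $\|\cdot\|_{k,\mathrm{var}}$ for $t>0$ is obtained by the same Girsanov comparison used in the proof of Lemma \ref{lem:lip-estimate}. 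I expect the point $t=0$, where the variation distance need not be small, to be delicate. I would handle it by working in the space where $\mu_\cdot$ is continuous in $\mathbb W_k$ and only bounded and measurable in $\|\cdot\|_{k,\mathrm{var}}$, and then recovering the required continuity a posteriori for the solution.

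\textbf{Step 2: contraction.} Equip the set $\mathscr C_\mu:=\{\mu_\cdot:\mu_0=\mu,\ \sup_t\mu_t(|\cdot|^k)\le N\}$ with the metric
$$\rho_\theta(\mu^1,\mu^2):=\sup_{t\le T}\mathrm e^{-\theta t}\big(\|\mu^1_t-\mu^2_t\|_{k,\mathrm{var}}+\mathbb W_k(\mu^1_t,\mu^2_t)\big).$$
Lemma \ref{lem:lip-estimate} applies to $\Phi(\mu^1)$ and $\Phi(\mu^2)$.
\begin{itemize}
\item The $\mathbb W_k$ part is controlled by $\big(\int_0^t(\|\cdot\|^k_{k,\mathrm{var}}+\mathbb W_k^k)\,\mathrm ds\big)^{1/k}$, which is at most $C\theta^{-1/k}\mathrm e^{\theta t}\rho_\theta$.
\item The variation part carries the factor $\frac1t\int_0^t\mathbb W_k^2\,\mathrm ds$.
\end{itemize}
\emph{This factor is the main obstacle}, because it is not small for small $t$. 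To handle it, I bound $\frac1t\int_0^t\mathbb W_k(\mu^1_s,\mu^2_s)^2\,\mathrm ds\le\sup_{s\le t}\mathbb W_k(\mu^1_s,\mu^2_s)^2$. I then iterate: first insert the $\mathbb W_k$ estimate for the images, noting that $\mathbb W_k(\Phi\mu^1_s,\Phi\mu^2_s)$ is small after weighting. Concretely, I show that $\Phi^2$, rather than $\Phi$, is a contraction for $\theta$ large. The variation part of $\Phi^2$ depends on $\sup_s\mathbb W_k(\Phi\mu^1_s,\Phi\mu^2_s)$, and this is bounded by $\theta^{-1/k}\mathrm e^{\theta s}\rho_\theta$. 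The prefactor $(1+\mathbb E|X_0|^p+\dots)^{k/p}$ is handled by first assuming $\mathbb E|X_0|^p<\infty$ and restricting to $\mathscr C_\mu$, where the moments are bounded. Completeness of $\mathscr C_\mu$ under $\rho_\theta$ follows from the completeness of $(\mathscr P_k,\|\cdot\|_{k,\mathrm{var}})$ noted in the introduction, so Banach's fixed point theorem yields a unique fixed point, and hence strong existence and uniqueness. For general $X_0$ with only $k$-th moments, I would condition on $\mathscr F_0$ and truncate, or alternatively use the $\mathbb W_k$-only estimate with $p=k$. Weak uniqueness then follows by Yamada--Watanabe applied to the frozen equation with the unique fixed-point flow.

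\textbf{Step 3: moment bound.} With $\mu_t=P_t^*\mu$ frozen, apply It\^o's formula to $(1+|X_t|^2)^{p/2}$ conditionally on $\mathscr F_0$, using the linear growth bounds from Step 1. This gives
$$\mathbb E[|X_t|^p\mid\mathscr F_0]\le C\Big(1+|X_0|^p+\int_0^t\big(1+\mu_s(|\cdot|^k)\big)^{p/k}\,\mathrm ds\Big).$$
Finally, the unconditional $k$-th moment estimate from Step 1 and Gronwall's inequality give $\sup_s P_s^*\mu(|\cdot|^k)\le C(1+\mu(|\cdot|^k))$, which yields the claimed bound.
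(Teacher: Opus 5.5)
Your overall route matches the paper's. You use a Picard map $\Phi$ on flows started at $\gamma$, Lemma \ref{lem:lip-estimate} as the stability estimate, and an $\mathrm e^{-\theta t}$-weighted sup metric that mixes $\|\cdot\|_{k,\mathrm{var}}$ and $\mathbb W_k$. Two of your deviations repair actual errors in the paper's argument. First, the paper bounds the $\frac1t\int_0^t\mathbb W_k^2$ term by $\big(\frac{1-\mathrm e^{-2\theta t}}{2\theta t}\big)^{1/2}d_\theta$ and claims this is small for large $\theta$. That factor tends to $1$ as $t\downarrow0$ for every $\theta$, so your passage to $\Phi^2$ (or, equivalently, a large weight on the $\mathbb W_k$ part of the metric) is what actually yields a contraction. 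Second, $t\mapsto\mathscr L_{X_t}$ cannot be $\|\cdot\|_{k,\mathrm{var}}$-continuous at $t=0$ when $\gamma$ has a singular part: for $\gamma=\delta_0$ the law is absolutely continuous for $t>0$, so the distance stays $2$. Working with $\mathbb W_k$-continuous flows is therefore right. Conditioning on $\mathscr F_0$ also correctly removes the $p$-th moment prefactor, because the other factor in Lemma \ref{lem:lip-estimate} is deterministic. The ``$\mathbb W_k$-only'' alternative does not work, since $b$ sees the measure through $\|\cdot\|_{k,\mathrm{var}}$.

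The genuine gap is the a priori moment control, which you never verify. Banach's theorem needs it: even to define $\Phi^2$ on $\mathscr C_\mu$ you need $\Phi(\mathscr C_\mu)\subset\mathscr C_\mu$.

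Start from your own bound $|b_t(x,\mu)|\le K|x|+C(1+\mu(|\cdot|^k))$. It\^o's formula and Young's inequality then give, for the frozen equation, $\mathbb E|X_t|^k\le C\big(1+\mathbb E|X_0|^k+\int_0^t(1+\mu_s(|\cdot|^k))^k\,\mathrm ds\big)$. In Step 3 the integrand comes out as $(1+\mu_s(|\cdot|^k))^p$, not $(1+\mu_s(|\cdot|^k))^{p/k}$. Consequences:
\begin{itemize}
\item On $\{\sup_t\mu_t(|\cdot|^k)\le N\}$ you only get the bound $C(1+\gamma(|\cdot|^k))+CTN^k$, which exceeds $N$ for large $N$ when $k>1$.
\item For $k=1$ the unweighted ball still fails unless $CT<1$. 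There the paper's time-weighted set $\sup_t\mathrm e^{-Nt}\mu_t(|\cdot|^k)\le N(1+\gamma(|\cdot|^k))$ restores invariance.
\item At the fixed point your closing Gronwall step reads $m(t)\le C(1+m(0))+C\int_0^t m(s)^k\,\mathrm ds$ for $m(t):=P_t^*\mu(|\cdot|^k)$. This inequality is superlinear and gives only local-in-time bounds.
\end{itemize}

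For $k>1$ this cannot be patched with (H1)-type growth, which is all your Steps 1--3 use. Take $k=2$, $\lambda=1$, $\sigma\equiv0$, $b_t(x,\mu)=\mu(|\cdot|^2)e_1$. This satisfies (H1) with $K=1$ and (H3) with $\alpha(r)=r$. It satisfies the $b$-part of (H2) with the $y$-independent constant $c=-\mu(|\cdot|^2)e_1$, if the constants for $b$ and $\sigma$ in (H2) are taken separately, as the paper's Remark does. For $X_0=x_0e_1$ set $u(t):=\mathbb E\langle X_t,e_1\rangle=x_0+\int_0^t\mathbb E|X_s|^2\,\mathrm ds$. Then $u'\ge u^2$, so no solution with finite second moments exists on $[0,T]$ once $x_0>1/T$.

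The paper's proof hides the same defect in \eqref{M1}, which asserts the exponent $p/k$. To complete your argument you must either restrict to $k=1$ or add a growth assumption such as $|b_t(x,\mu)|\le K(1+|x|+\mu(|\cdot|^k)^{1/k})$. In either case, replace your unweighted ball by a time-weighted invariant set; your $\Phi^2$ contraction then goes through on it.
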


\begin{proof}
Fix an initial distribution $\gamma\in\mathscr{P}_k$ and let
\[
\mathscr{C}_T^\gamma
:=\big\{\mu_\cdot\in C([0,T];\mathscr{P}_k):\ \mu_0=\gamma\big\}.
\]
For any $\mu_\cdot\in\mathscr{C}_T^\gamma$, consider the  SDE \eqref{FrozenSDE}. The map $\Phi:\mathscr{C}_T^\gamma\to\mathscr{C}_T^\gamma$ is defined by 
  $$\Phi_t(\mu):= \L_{X^{\mu_{\cdot}}_t}, \ \ t\in[0,T]. $$
To prove that $\Phi$ has a unique fixed point in $\mathscr{C}_T^\gamma$, we need to restrict the map to the following bounded subspaces of $\mathscr{C}_T^\gamma$:
 $$\C^{\gamma}_N:=\left\{\mu_{\cdot}\in \mathscr{C}_T^\gamma:\sup_{t\in [0,T]}\e^{-Nt}\mu_t(|\cdot|^k)\leq N (1+\gamma(|\cdot|^k))\right\}.$$
 For each $\mu_\cdot\in\mathscr{C}_T^\gamma$, \eqref{FrozenSDE} admits a unique strong solution satisfying
   \begin{equation*}
   \E[|X_t^{{\mu}_{\cdot}}|^k]\leq c_2\left(1+\gamma(|\cdot|)^k+ \int^t_0\mu_s(|\cdot|^k)\d s\right),\ \ t\in[0,T]
\end{equation*}
due to \eqref{M1}. Then there exists a constant $N_0 >0$, such that for any $N > N_0$, the set $\C^{\gamma}_N$ is invariant under the map $\Phi$.\\
For any $\mu^i \in \C^{\gamma}_N, i=1,2$ , we apply  Lemma~\ref{lem:lip-estimate}, then there exists a constant $c_6(N)>0$ depending on $N$ such  that
\begin{align}\label{Step2-TV2}
\nonumber&\quad\|\Phi_{t}(\mu^1)-\Phi_{t}(\mu^2)\|_{k,\mathrm{var}}+\mathbb{W}_k(\Phi_{t}(\mu^1),\Phi_{t}(\mu^2))\\
&\leq
c_6(N)\Bigg(
\int_0^t \|\mu_s^1-\mu_s^2\|_{k,\mathrm{var}}^2 \,\mathrm{d}s
+ \frac{1}{t}\int_0^t \mathbb{W}_k(\mu_s^1,\mu_s^2)^2 \,\mathrm{d}s
\Bigg)^{{\frac{1}{2}}}\\
\nonumber&+c_6(N) \Bigg(
\int_0^t \|\mu_s^1-\mu_s^2\|_{k,\mathrm{var}}^k \,\mathrm{d}s
+ \int_0^t \mathbb{W}_k(\mu_s^1,\mu_s^2)^k \,\mathrm{d}s
\Bigg)^{{\frac{1}{k}}}
\end{align}
holds for any $t\in(0,T]$. For $\theta>0$, define a weighted distance on $\mathscr{C}_T^\gamma$ by
\[
d_\theta(\mu,\nu)
:=
\sup_{t\in[0,T]}
\mathrm{e}^{-\theta t}[
\|\mu_t-\nu_t\|_{k,\mathrm{var}}+\mathbb{W}_k(\mu_t,\nu_t)].
\]
It is straightforward to check that $(\mathscr{C}_T^\gamma,d_\theta)$ is a complete metric space. Multiplying both sides of \eqref{Step2-TV2} by $\mathrm{e}^{-\theta t}$, we obtain
\begin{align*}
&\quad \mathrm{e}^{-\theta t} \Big(
\|\Phi_{t}(\mu^1)-\Phi_{t}(\mu^2)\|_{k,\mathrm{var}}
+ \mathbb{W}_k(\Phi_{t}(\mu^1),\Phi_{t}(\mu^2))
\Big) \\
&\le c_6(N)
\Bigg[
\int_0^{t} \mathrm{e}^{-2\theta t} \|\mu_s^1 - \mu_s^2\|_{k,\mathrm{var}}^2 ds
+ \frac{1}{t} \int_0^{t} \mathrm{e}^{-2\theta t}\mathbb{W}_k(\mu_s^1,\mu_s^2)^2 \d s
\Bigg]^{1/2} \\
&\quad + c_6(N) \Bigg[
\int_0^{t} \mathrm{e}^{-k\theta t} \|\mu_s^1 - \mu_s^2\|_{k,\mathrm{var}}^k \d s
+ \int_0^{t} \mathrm{e}^{-k\theta t} \mathbb{W}_k(\mu_s^1, \mu_s^2)^k \d s
\Bigg]^{1/k}\\
& \leq c_7(N) \left[ \left(
    \frac{1 - \mathrm{e}^{-2\theta t}}{2t\theta}
   \right)^{\frac{1}{2}}+\left(\frac{1 - \mathrm{e}^{-k\theta t}}{k\theta}\right)^{\frac{1}{k}} \right]d_\theta(\mu^1, \mu^2)
\end{align*}
For some constant $c_7(N)>0$. Taking the supremum over $t\in(0,T]$ and we can choose $\theta>0$ sufficiently large such that the map $\Phi$ becomes a strict contraction. Hence the SDE \eqref{EQ} is well-posed for distributions in $\mathscr{P}_k$.
\end{proof}
We derive
 \begin{equation*}
 \E[|X_t^{\mu}|^p]\leq c(1+|X_0^{\mu
 }|^p+\int^t_0(\E[|X^{\mu}_s|^k])^{\frac{p}{k}}\d s),\ \ t\in[0,T]
\end{equation*}
due to \eqref{M1}. It follows that
 \beq\label{M5} 
\E[\sup_{t\in[0,T]}|X_t^{\mu}|^p|\F_0]\leq c_p(1+|X_0^{\mu
 }|^p),\ \ p\in [1,\infty ), \mu\in\mathscr{P}_k
\end{equation}
for a constant $c_p$. Therefore,
\beq\label{M6} 
\int_{\R^d}|\cdot|^p \d (P^{\mu}_t)^{*}\nu\leq c_p(1+\nu(|\cdot|)^p),\ \ p\in [1,\infty ), \mu, \nu \in\mathscr{P}_k.
\end{equation}
Denote $\pi_{\varepsilon}=(1-\varepsilon)\mu+\varepsilon\nu, \varepsilon\in(0,1)$, then there exists a constant $c_k$ such that
\beq\label{MN} 
	\|{P_s^{*}\pi_{\varepsilon}}-{P_s^{*}\mu}\|_{k,var}\leq c_k(\mu+\nu)(|\cdot|)^k,\ \ \mu, \nu \in\mathscr{P}_k. 
\end{equation}

{{\begin{rem}
We establish the well-posedness result for the considered DDSDE under Condition {\bf{(H)}}, and this result has not been covered by any existing literature.  

\end{rem}}}
{{\begin{rem}
In this subsection we prove the well-posedness of the DDSDE~\eqref{EQ} in $\mathscr P_k$ by a fixed point
argument. Using the $k$-moment estimate induced by \eqref{M1}, we introduce the
invariant bounded subset $\mathscr C_N^\gamma\subset\mathscr C_T^\gamma$.
To obtain uniqueness, we equip $\mathscr C_T^\gamma$ with the weighted distance
\[
d_\theta(\mu,\nu)
:=
\sup_{t\in[0,T]}
\mathrm{e}^{-\theta t}\Big(
\|\mu_t-\nu_t\|_{k,\mathrm{var}}+\mathbb W_k(\mu_t,\nu_t)
\Big),
\]
under which $(\mathscr C_T^\gamma,d_\theta)$ is complete. The stability estimate \eqref{Step2-TV2}
implies that, for $\theta$ sufficiently large, $\Phi$ is a strict contraction on $\mathscr C_N^\gamma$ in the
metric $d_\theta$. Therefore, $\Phi$ admits a unique fixed point, yielding existence and uniqueness of the
solution.\end{rem}}}

\subsection{Lemmas}
\begin{lem}\label{WP} Assume {\bf (H)}. Then there exists a constant $C(\mu,\nu)>0$ increasing in $(\mu+\nu)(|\cdot|^k)$ such that \\
	\begin{equation*}
	\|(P_t^{\pi_\varepsilon})^{*}\gamma - (P_t^{\mu})^{*}\gamma\|_{k,var}\leq C(\mu,\nu)(1+\gamma(|\cdot|)^k)\varepsilon,\ \ \gamma\in\mathscr{P}_k. 
\end{equation*}
\end{lem}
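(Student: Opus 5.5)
The plan has two steps: first show that the nonlinear flows $P_t^*\pi_\varepsilon$ and $P_t^*\mu$ are $O(\varepsilon)$-close, then transfer this to the decoupled semigroups through Lemma~\ref{lem:lip-estimate}.

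Observe that $(P_t^{\pi_\varepsilon})^*\gamma$ and $(P_t^{\mu})^*\gamma$ are the laws at time $t$ of the solutions of the frozen SDE \eqref{FrozenSDE} with the same initial law $\gamma$. The frozen flows are $\mu^1_\cdot=P^*_\cdot\pi_\varepsilon$ and $\mu^2_\cdot=P^*_\cdot\mu$. Lemma~\ref{lem:lip-estimate}, with some $p>k$ and an initial variable $X_0\sim\gamma$, therefore gives
\begin{equation*}
\|(P_t^{\pi_\varepsilon})^{*}\gamma - (P_t^{\mu})^{*}\gamma\|_{k,var}\le C\Big(1+\E|X_0|^p+\int_0^t (P_s^*\pi_\varepsilon)(|\cdot|^k)^{\frac pk}\d s\Big)^{\frac kp}\Big(\int_0^t \|\mu^1_s-\mu^2_s\|_{k,var}^2\d s+\frac1t\int_0^t\mathbb W_k(\mu^1_s,\mu^2_s)^2\d s\Big)^{\frac12}.
\end{equation*}
The moment prefactor is handled by Lemma~\ref{WP1}, which bounds $(P_s^*\pi_\varepsilon)(|\cdot|^k)$ by $C(1+(\mu+\nu)(|\cdot|^k))$. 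The factor $\E|X_0|^p$ is awkward because it depends on $\gamma$. I would avoid it by conditioning on $X_0=x$, applying the estimate pointwise with a deterministic initial point, and then integrating against $\gamma$. This produces the factor $(1+|x|^k)$ inside and hence $(1+\gamma(|\cdot|^k))$ after integration. It is consistent with the form of the claim.

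\textbf{Key step:} prove $\sup_{s\le T}\big(\|P_s^*\pi_\varepsilon-P_s^*\mu\|_{k,var}+\mathbb W_k(P_s^*\pi_\varepsilon,P_s^*\mu)\big)\le C(\mu,\nu)\,\varepsilon$. I would use the fixed point structure. Let $X^{\pi_\varepsilon}$ and $X^\mu$ solve \eqref{EQ}, and realise the initial data by a coupling with $\P(X_0^{\pi_\varepsilon}\neq X_0^\mu)\le\varepsilon$: take $X_0^{\pi_\varepsilon}=X_0^\mu$ with probability $1-\varepsilon$ and a $\nu$-distributed variable otherwise.

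I then split each distance into two pieces.
\begin{itemize}
\item The first piece compares two frozen SDEs with the same frozen flow $P^*\mu$ but different initial laws. Since the frozen dynamics is linear in the initial law, this piece equals $\varepsilon\big((P_s^\mu)^*\nu-(P_s^\mu)^*\mu\big)$. Its $\|\cdot\|_{k,var}$ norm is at most $\varepsilon\, c(1+(\mu+\nu)(|\cdot|^k))$ by \eqref{M6}. The $\mathbb W_k$ piece is bounded by the coupling, using Hölder and the moment bound \eqref{M5}.
\item The second piece compares the same initial law under the two flows, and is controlled by Lemma~\ref{lem:lip-estimate} exactly as in the well-posedness proof.
\end{itemize}
This yields the integral inequality
\begin{equation*}
g(t)\le C\varepsilon+C\Big(\int_0^t g(s)^2\d s+\frac1t\int_0^t g(s)^2\d s\Big)^{\frac12}+C\Big(\int_0^tg(s)^k\d s\Big)^{\frac1k}
\end{equation*}
for $g(t)$ the combined distance. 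The weighted-sup argument with $d_\theta$ from Lemma~\ref{WP1} then gives $\sup_t e^{-\theta t}g(t)\le 2C\varepsilon$ for $\theta$ large.

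\textbf{Main obstacle.} The $\mathbb W_k$ part of the key step does not scale linearly in $\varepsilon$. The coupling gives $\mathbb W_k(P_s^*\pi_\varepsilon,P_s^*\mu)\lesssim\varepsilon^{1/k}$, whereas Lemma~\ref{lem:lip-estimate} needs $\mathbb W_k^2\lesssim\varepsilon^2$ inside the square root.

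First fix: note that only the noise coefficient $\sigma$ involves $\mathbb W_k$, and $\sigma$ enters only through $\xi^{\mu^1}-\xi^{\mu^2}$. By the remark after (H), \eqref{hh3} gives $\|\sigma_t(\mu)-\sigma_t(\nu)\|\le C\|\mu-\nu\|_{k,var}$. So in the proof of Lemma~\ref{lem:lip-estimate}, estimate \eqref{XX} can be redone with $\|\cdot\|_{k,var}$ in place of $\mathbb W_k$. The whole Gronwall/contraction argument then closes in $\|\cdot\|_{k,var}$ alone, giving $\sup_s\|P_s^*\pi_\varepsilon-P_s^*\mu\|_{k,var}\le C\varepsilon$.

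Consequence for the first step: inserting this into the modified estimate gives $C\varepsilon(1+t^{-1}\cdot t)^{1/2}$. The factor $1/t$ is harmless because the integrand is uniformly $O(\varepsilon^2)$. The claim follows, with $C(\mu,\nu)$ increasing in $(\mu+\nu)(|\cdot|^k)$ through the moment bounds.
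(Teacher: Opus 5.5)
Your route is the paper's own: Girsanov with the $\xi$-correction, and the split of $P_s^*\pi_\varepsilon-P_s^*\mu$ into an $\varepsilon$-piece from the initial law plus a same-initial-law/different-flow piece. You also use the bound $\|\sigma_s(\mu)-\sigma_s(\nu)\|\le C\|\mu-\nu\|_{k,\mathrm{var}}$ from the remark after \textbf{(H)}, which the paper uses implicitly to pass from \eqref{PH3} to \eqref{R}. Then come a Gronwall argument for the flows and the transfer to general $\gamma$ by conditioning on the starting point. The conditioning trick and the final transfer step are fine.

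The step that fails is ``the whole Gronwall/contraction argument then closes in $\|\cdot\|_{k,\mathrm{var}}$ alone''. After your modification of \eqref{XX}, the $\xi$-correction costs $\frac1t\mathbb{E}|\xi^{\pi_\varepsilon}(t)-\xi^{\mu}(t)|^2\le \frac{C}{t}\int_0^t g(s)^2\,\mathrm{d}s$, where $g(s):=\|P_s^*\pi_\varepsilon-P_s^*\mu\|_{k,\mathrm{var}}$. So all you get is
\[
g(t)^2\le C\varepsilon^2+C\int_0^t g(s)^2\,\mathrm{d}s+\frac{C}{t}\int_0^t g(s)^2\,\mathrm{d}s,\qquad t\in(0,T],
\]
where $C$ depends on $\lambda$, $K$ and moments and is not small. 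This inequality cannot give $g=O(\varepsilon)$, for three reasons:
\begin{itemize}
\item The $\mathrm{e}^{-\theta t}$-weighting only produces the factor $\frac{1-\mathrm{e}^{-2\theta t}}{2\theta t}$ in front of the last term, and its supremum over $t\in(0,T]$ is $1$ for every $\theta$.
\item Gronwall does not apply to a kernel $C/t$.
\item If $C\ge 1$, every constant function $g\equiv M$ satisfies the inequality.
\end{itemize}
The $1/t$ is intrinsic to the coupling: it is the Girsanov cost $|\xi^{\pi_\varepsilon}(t)-\xi^\mu(t)|^2/(\lambda^2 t)$ of compensating the noise mismatch by a drift on $[0,t]$. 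The paper's ``applying Gronwall's lemma'' after \eqref{R} is applied to an inequality of exactly this form. So it has the same defect and cannot be used to fill the gap.

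The missing idea is to keep $\mathbb{W}_k$, but only on the piece where the initial laws coincide, because there it does scale linearly in $\varepsilon$. With $m_s:=(P_s^{\pi_\varepsilon})^*\mu$ one has $P_s^*\pi_\varepsilon=m_s+\varepsilon(P_s^{\pi_\varepsilon})^*(\nu-\mu)$, hence
\[
h(s):=\|\sigma_s(P_s^*\pi_\varepsilon)-\sigma_s(P_s^*\mu)\|\le C\varepsilon\big(1+(\mu+\nu)(|\cdot|^k)\big)+K\,\mathbb{W}_k(m_s,P_s^*\mu).
\]
The first term uses the variation-Lipschitz bound together with \eqref{M6}; the second uses \eqref{hh2}. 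Now $m_s$ and $P_s^*\mu$ are the laws of the two frozen equations started from the same $X_0\sim\mu$. Run the synchronous coupling behind the first estimate of Lemma~\ref{lem:lip-estimate} with exponent $q:=k\vee 2$, keeping $h$ instead of bounding it by $\mathbb{W}_k$. This gives $\mathbb{W}_k(m_s,P_s^*\mu)^q\le C\int_0^s\big(g(r)^q+\varepsilon^q\big)\,\mathrm{d}r$, which is linear in $\varepsilon$ and has no $1/t$. Consequently
\[
\frac1t\int_0^t h(s)^2\,\mathrm{d}s\le \sup_{s\le t}h(s)^2\le C\varepsilon^2+C\Big(\int_0^t g(r)^q\,\mathrm{d}r\Big)^{2/q}.
\]
The Girsanov bound for $\|m_t-P_t^*\mu\|_{k,\mathrm{var}}$ then becomes $g(t)\le C\varepsilon+C\big(\int_0^t g(r)^q\,\mathrm{d}r\big)^{1/q}$. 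Ordinary Gronwall, with $g$ bounded a priori by \eqref{MN}, yields $\sup_{t\le T}g(t)\le C(\mu,\nu)\varepsilon$. After that, your transfer to general $\gamma$ goes through as written.
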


\begin{proof}
	Let $X_0$ be $\F_0$ -measurable with $\L_{X_0}=\gamma.$ Consider the SDE
	\begin{equation*}
	\d X_t=b_t(X_t,P_t^{*}\mu)\d t+\lambda\d W_t+\sigma_t(P_t^{*}\mu)\d B_t,\ \  t\in[0,T],
\end{equation*}
and the SDE
\begin{equation*}\begin{aligned} 
\d Y_t&=b_t(X_t,P_t^{*}\mu)\d t+\frac{1}{t_0}[\xi^{\mu}(t_0)-\xi ^{\pi_{\varepsilon}}(t_0)]\d t \\
&+\lambda\d W_t+\sigma_t(P_t^{*}{\pi_{\varepsilon}})\d B_t,\ \ X_0=Y_0, t\in[0,T],
\end{aligned}\end{equation*} 
where $\xi ^{\mu}(t):=\int_0^{t}{\sigma_s(P_s^{*}\mu)} \d B_s$, $\xi ^{\pi_{\varepsilon}}(t):=\int_0^{t}{\sigma_s(P_s^{*}\pi_{\varepsilon})} \d B_s$ and $(P_t^{\mu})^{*}\gamma= \L_{X_t}$. Let
$$\phi^{\varepsilon}(t):=b_t(X_t,P^{*}_t\mu)-b_t(Y_t,P^{*}_t\pi_{\varepsilon})-\frac{1}{t_0}\left[\xi^{\pi_{\varepsilon}}(t_0)-\xi^{\mu}(t_0)\right].$$
By \eqref{PH2}, there exists a constant $C_1>0$ such that
\beq\label{PH3}
	|\phi^{\varepsilon}(t)|^2\leq\frac{C_1}{t^2_0}\sup_{t\in[0,t_0]}\left|\xi^{\pi_{\varepsilon}}(t)-\xi^{\mu}(t)\right|^2 + C_1\|{P_t^{*}\pi_{\varepsilon}}-{P_t^{*}\mu}\|_{k,var}^2 ,\ \ t\in[0,t_0].
\end{equation} 	
Let $\d\Q^{B}:= R^{\varepsilon}_{t_0}\d\P^{B}$, where
$$R^{\varepsilon}_{t_0}:=\e^{\int_{0}^{t_0}\<\frac{1}{\lambda}\phi^{\varepsilon}(s),\d W_s \>-\frac{1}{2}\int_{0}^{t_0}|\frac{1}{\lambda}\phi^{\varepsilon}(s)|^2\d s}.$$
By Girsanov's theorem, under the weighted conditional probability $\Q^{B}$,
$$\tilde{W}^1_t:=W_t-\int_{0}^{t}\frac{1}{\lambda}\phi^{\varepsilon}(s)\d s,\ \ t\in[0,t_0]$$
is a $d$-dimensional Brownian motion. 	
Since $X_{t_0}=Y_{t_0}$, we have
\begin{equation}\label{PKV}\begin{aligned}
\|(P_{t_0}^{\pi_\varepsilon})^{*}\gamma - (P_{t_0}^{\mu})^{*}\gamma\|_{k,var}&= \sup_{f\leq1+|\cdot|^k}\left|\E\left[ \E^{B}[f(X_{t_0})(R^{\varepsilon}_{t_0}-1)]\right]\right|\\
&\leq\E\left[ \E^{B}[(1+|X_{t_0}|^k)|R^{\varepsilon}_{t_0}-1|	\right]\\
&\leq (1+\gamma(|\cdot|^k))\E\left[\E^{B}(|R^{\varepsilon}_{t_0}|^2-1|\F_0)\right]^{\frac{1}{2}}.
\end{aligned}\end{equation}	
By \eqref{M6}, \eqref{MN}, \eqref{PH3} and ${P_s^{*}\pi_{\varepsilon}}-{P_s^{*}\mu}=(P_s^{\pi_s})^{*}\mu-P_s^{*}\mu+\varepsilon(P_s^{\pi_s})^{*}(\nu-\mu)$,  we can find  a constant $C_2(\mu,\nu)>0$ increasing in $(\mu+\nu)(|\cdot|^k)$ such that 
\begin{equation}\label{R}\begin{aligned}
\E^{B}(|R^{\varepsilon}_{t_0}|^2-1)\leq &C_2(\mu,\nu){\varepsilon}^2+C_2(\mu,\nu)\frac{1}{t_0}\int_{0}^{t_0}\|(P_s^{\pi_\varepsilon})^{*}\mu-P_s^{*}\mu\|^2_{k,var}\d s.
\end{aligned}\end{equation}	
Combining this with \eqref{PKV}	for $\gamma=\mu$ and applying Gronwall's lemma, we derive
$$\|(P_t^{\pi_\varepsilon})^{*}\mu-P_t^{*}\mu\|^2_{k,var}\leq C_2(\mu,\nu)(1+\mu(|\cdot|)^k)^2{\varepsilon}^2\times \mathrm{e}^{C_2(\mu,\nu)(1+\mu(|\cdot|)^k)^2},$$
and then
$$\|(P_t^{\pi_\varepsilon})^{*}\gamma - (P_t^{\mu})^{*}\gamma\|_{k,var}\leq C_3(\mu,\nu)(1+\gamma(|\cdot|)^k)\varepsilon$$
for a constant $C_3(\mu,\nu)>0$ increasing in $(\mu+\nu)(|\cdot|^k)$.	
\end{proof}

\begin{lem}\label{ETA1} Assume {\bf (H)}.  For any $t>0$, let
	\begin{equation*}\begin{aligned}
	J^{\varepsilon}_s:=&P_{s}^{\mu}\left(\tilde{D}^Eb_{s}(X^{\mu}_{s},P^{*}_{s}\mu)(\mu)-\tilde{D}^Eb_{s}(X^{\mu}_{s},P^{*}_{s}\mu)(\nu)\right)\\
	&-\E\left[\tilde{D}^Eb_{s}(z,P^{*}_s\mu)(X^{\mu}_{s})\int_{0}^{s}\left<\frac{1}{\lambda}\eta_{r,s}^{\varepsilon,\mu,\nu},\d W_r\right>\right]_{z=X^{\mu}_{s}},\\
	H^{\varepsilon}_t:=&\int_{0}^{t}P_{r}^{\mu}\left(\tilde{D}^E\sigma_{r}(P^{*}_{r}\mu)(\mu)-\tilde{D}^E\sigma_{r}(P^{*}_{r}\mu)(\nu)\right)\d B_r\\
	&-\int_{0}^{t}\E\left[\tilde{D}^E\sigma_{r}(P^{*}_r\mu)(X_{r}^{\mu})\int_{0}^{r}\left<\frac{1}{\lambda}\eta_{s,r}^{\varepsilon,\mu,\nu},\d W_s\right>\right]\d B_r.\\
	\end{aligned}\end{equation*} Then
    $$\eta_{s,t}^{\varepsilon,\mu,\nu}:=\frac{1}{\varepsilon}\left\{b_s(X^{\mu}_{s},P^{*}_s\mu)-b_s(Y_s,P^{*}_s{\pi_\varepsilon})-\frac{1}{t}\left(\xi^{\pi_{\varepsilon}}(t)-\xi^{\mu}(t)\right)\right\},\ \ s\leq t$$
satisfies 
\begin{equation}\label{ETA2}\begin{aligned}
\eta_{s,t}^{\varepsilon,\mu,\nu}
&=J^{\varepsilon}_s+\left\langle \nabla b(\cdot, P_{s}^{*}\pi_\varepsilon)(X^{\mu}_{s}), H^{\varepsilon}_s-\frac{s}{t}H^{\varepsilon}_t \right\rangle +\frac{1}{t}H^{\varepsilon}_t + o_{s}(\varepsilon) + o_{t}(\varepsilon),
\end{aligned}\end{equation}
where $Y_s$ solves the equation
\begin{equation*} \begin{aligned}
	\d Y_s&=b_t(X^{\mu}_{s},P^{*}_s\mu)\d s+\frac{1}{t}[\xi^{\mu}(t)-\xi^{\pi_{\varepsilon}}(t)]\d s \\
	&+\lambda\d W_s+\sigma_s(P_s^{*}{\pi_{\varepsilon}})\d B_s,\ \ X_0=Y_0, s\in[0,t]
\end{aligned}
\end{equation*}
and $\{o_t(\varepsilon)\}_{t\in (0,T]}$  is progressively measurable such that $$\lim\limits_{\varepsilon\downarrow 0}\sup_{t\in (0,T]}\E{|o_t(\varepsilon)|^2}=0.$$
\end{lem}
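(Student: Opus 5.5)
The proof will be a first-order expansion in $\varepsilon$ of the three pieces of $\eta^{\varepsilon,\mu,\nu}_{s,t}$. We split
\[
\varepsilon\,\eta^{\varepsilon,\mu,\nu}_{s,t}=\big[b_s(X^\mu_s,P_s^*\mu)-b_s(X^\mu_s,P_s^*\pi_\varepsilon)\big]+\big[b_s(X^\mu_s,P_s^*\pi_\varepsilon)-b_s(Y_s,P_s^*\pi_\varepsilon)\big]+\tfrac1t\big(\xi^{\mu}(t)-\xi^{\pi_\varepsilon}(t)\big),
\]
and treat the measure increment of $b$, the spatial increment of $b$, and the noise increment separately. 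Each will be rewritten as $\varepsilon$ times an explicit term plus $o(\varepsilon)$ in $L^2(\P)$, uniformly in $s\le t\le T$.

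\textbf{Step 1: the basic measure expansion.} The key observation is
\[
P_s^*\pi_\varepsilon-P_s^*\mu=\big[(P_s^{\pi_\varepsilon})^*\mu-(P_s^{\mu})^*\mu\big]+\varepsilon(P_s^{\pi_\varepsilon})^*(\nu-\mu).
\]
By Lemma \ref{WP} both brackets are $O(\varepsilon)$ in $\|\cdot\|_{k,var}$.

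To identify the first bracket, we use the Girsanov representation from the proof of Lemma \ref{WP}. For test functions $g$ with $|g|\le c(1+|\cdot|^k)$,
\[
(P_s^{\pi_\varepsilon})^*\mu(g)-(P_s^\mu)^*\mu(g)=\E\big[g(X^\mu_s)(R^\varepsilon_s-1)\big].
\]
Since the drift in the Girsanov density is $\phi^\varepsilon=\varepsilon\,\eta^{\varepsilon,\mu,\nu}_{\cdot,s}$, this gives
\[
R^\varepsilon_s-1=\varepsilon\int_0^s\Big\langle\tfrac1\lambda\eta^{\varepsilon,\mu,\nu}_{r,s},\d W_r\Big\rangle+O(\varepsilon^2)
\]
in $L^2$. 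The error is controlled by the bound $\E^B|R^\varepsilon|^2-1\le C\varepsilon^2$ from \eqref{R}, together with $|e^x-1-x|\le x^2e^{|x|}$ and BDG.

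The second bracket equals $\varepsilon\big((P_s^\mu)^*(\nu-\mu)\big)+o(\varepsilon)$, again by Lemma \ref{WP}.

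\textbf{Step 2: the measure increment of $b$ and the noise increment.} For the $b$-measure term we apply the flat-derivative identity \eqref{eq:flat-FTC} with $f=b_s(x,\cdot)$, evaluated at $x=X^\mu_s$. By (H3) and the continuity of $\tilde D^E b$, the interpolated derivative $\tilde D^Eb_s(x,rP_s^*\pi_\varepsilon+(1-r)P_s^*\mu)$ can be replaced by $\tilde D^Eb_s(x,P_s^*\mu)$. The cost is $\alpha(C\varepsilon)(1+|y|^k+\cdots)$, which is $o(1)$ after integrating against measures with bounded $k$-th moments, using \eqref{M6} and \eqref{MN}. Inserting the expansion from Step 1 produces exactly $\varepsilon J^\varepsilon_s+o_s(\varepsilon)$. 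Here the deterministic $z$ is frozen at $X^\mu_s$, and the growth condition \eqref{hh3} (after subtracting the constant $c$, which integrates to zero against signed measures of zero mass) ensures integrability.

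The same computation applied to $\sigma_r$ inside the stochastic integral gives
\[
\xi^{\pi_\varepsilon}(t)-\xi^\mu(t)=-\varepsilon H^\varepsilon_t+o_t(\varepsilon),
\]
by It\^o's isometry; the $o$-term is controlled through $\int_0^t\E|\cdot|^2\d r$. This accounts for the term $\frac1tH^\varepsilon_t$.

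\textbf{Step 3: the spatial increment of $b$.} From \eqref{YXEQ} (with $\mu^1=P^*\mu$ and $\mu^2=P^*\pi_\varepsilon$),
\[
Y_s-X^\mu_s=\tfrac st\big(\xi^\mu(t)-\xi^{\pi_\varepsilon}(t)\big)+\xi^{\pi_\varepsilon}(s)-\xi^\mu(s)=\varepsilon\Big(\tfrac st H^\varepsilon_t-H^\varepsilon_s\Big)+o(\varepsilon).
\]
Hence
\[
b_s(X^\mu_s,P_s^*\pi_\varepsilon)-b_s(Y_s,P_s^*\pi_\varepsilon)=\varepsilon\Big\langle\nabla b(\cdot,P_s^*\pi_\varepsilon)(X^\mu_s),H^\varepsilon_s-\tfrac st H^\varepsilon_t\Big\rangle+o(\varepsilon).
\]
Dividing all three contributions by $\varepsilon$ yields \eqref{ETA2}.

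\textbf{Main obstacle.} Step 3 requires differentiability of $b$ in $x$, which is not listed in (H). The paper's formula implicitly assumes it, and I would use a mean-value/dominated-convergence argument relying on the Lipschitz bound in (H1). The harder issue is making the remainder in Step 1 uniform. The expansion involves $\eta^{\varepsilon}$ itself, so one needs an a priori bound $\sup_{\varepsilon}\|\eta^\varepsilon\|_{\mathscr M_T}<\infty$. I would obtain it first from \eqref{PH3} and Lemma \ref{WP}: dividing by $\varepsilon$ gives $\E|\eta^\varepsilon_{s,t}|^2\le C(\mu,\nu)$, and this feeds into the $O(\varepsilon^2)$ Girsanov remainder via Gronwall.
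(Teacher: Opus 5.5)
Your plan follows the paper's proof. It uses the same three-way split of $\varepsilon\eta^{\varepsilon,\mu,\nu}_{s,t}$: your Steps 1--2 for $b$ correspond to $I_1$--$I_4$, which the paper imports from \cite[Lemma 2.2]{PPED} rather than proving. Your It\^o-isometry step for $\sigma$ corresponds to \eqref{o1}--\eqref{xih}, and Step 3 is the $I_5$ argument. Your explicit Girsanov expansion of $(P_s^{\pi_\varepsilon})^*\mu-(P_s^\mu)^*\mu$ supplies what the paper outsources.

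The genuine gap is the claimed uniformity in $t\in(0,T]$. The paper shares it, and so does the statement itself. The remainder generated by the last term of $\eta^{\varepsilon,\mu,\nu}_{s,t}$ is $\rho^\varepsilon_t=-\frac1t\int_0^t e^\varepsilon_r\,\mathrm{d}B_r$. Here $e^\varepsilon_r$ is the deterministic error in the first-order expansion of $\varepsilon^{-1}\big(\sigma_r(P_r^*\pi_\varepsilon)-\sigma_r(P_r^*\mu)\big)$. Hence $\mathbb{E}|\rho^\varepsilon_t|^2=t^{-2}\int_0^t\|e^\varepsilon_r\|_{\mathrm{HS}}^2\,\mathrm{d}r$. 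Step 2 only yields $\sup_r\|e^\varepsilon_r\|_{\mathrm{HS}}\to0$, that is, $\mathbb{E}|\rho^\varepsilon_t|^2\le t^{-1}\sup_r\|e^\varepsilon_r\|_{\mathrm{HS}}^2$. You lost the factor $\frac1t$ when you said It\^o's isometry controls this term.

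This bound is not uniform as $t\downarrow0$, and cannot be made so. Take $b\equiv0$ and $\sigma(\mu)=\varphi(\mu(h))I_d$, with $h,\varphi$ smooth and $\varphi''(\mu(h))\,(\nu-\mu)(h)\neq0$. Then the remainder in \eqref{ETA2} is exactly $\rho^\varepsilon_t$. As $r\downarrow0$, $e^\varepsilon_r\to\big[\varepsilon^{-1}\big(\varphi(\pi_\varepsilon(h))-\varphi(\mu(h))\big)-\varphi'(\mu(h))(\nu-\mu)(h)\big]I_d\neq0$. So $\sup_{t\in(0,T]}\mathbb{E}|\rho^\varepsilon_t|^2=\infty$ for all small $\varepsilon$. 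The paper's \eqref{o1}--\eqref{xih} does not fix this, since the weight needed there is $t^{-2}$, not $t^{-1}$.

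What your computation does give is smallness in $\mathscr M_T$, and that is what the later use of $\int_0^t\langle\eta_{s,t},\mathrm{d}W_s\rangle$ needs. Concretely, $\int_0^t\mathbb{E}|\rho^\varepsilon_t|^2\,\mathrm{d}s\le\sup_r\|e^\varepsilon_r\|_{\mathrm{HS}}^2\to0$ uniformly in $t$. Alternatively you get pointwise uniformity on $[t_0,T]$ for each $t_0>0$.

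There are three further points.

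First, your a priori bound $\mathbb{E}|\eta^{\varepsilon,\mu,\nu}_{s,t}|^2\le C(\mu,\nu)$ is false for the same reason: \eqref{PH3} only gives $C(1+t^{-1})$. The $\mathscr M_T$-bound you actually want still holds.

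Second, an $L^2$-type bound is not enough for $R^\varepsilon_s-1-\varepsilon M^\varepsilon_s=o(\varepsilon)$ in $L^2(\mathbb{P}(\cdot\,|\,\mathscr F_0))$, where $M^\varepsilon_s:=\int_0^s\langle\frac1\lambda\eta^{\varepsilon,\mu,\nu}_{r,s},\mathrm{d}W_r\rangle$. Via $|e^x-1-x|\le x^2e^{|x|}$ you need higher and exponential moments of $M^\varepsilon_s$ and $\langle M^\varepsilon\rangle_s$. These hold because, by \eqref{YXEQ}, $X^\mu_r-Y_r$ is a Wiener integral in $B$. So given $(B,\mathscr F_0)$, $\sup_{r\le s}|\eta^{\varepsilon,\mu,\nu}_{r,s}|\le C+C\varepsilon^{-1}(1+s^{-1})\sup_{u\le s}|\xi^{\pi_\varepsilon}(u)-\xi^\mu(u)|$. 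This bound is a $B$-measurable variable whose square times $s$ is a Gaussian functional with exponential moments of small order. You must use this domination explicitly, and work under $\mathbb{P}^B$. Working under $\mathbb{P}^B$ is also what makes the $\mathrm{d}W$-integral of the non-adapted $\eta^{\varepsilon,\mu,\nu}_{\cdot,s}$ meaningful.

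Third, in Step 3 the Lipschitz bound alone cannot produce the $\nabla b$ term. You must assume, as the paper does without listing it in (H), that $b$ is $C^1$ in $x$. You also need $\nabla b_s(\cdot,m)$ to have a modulus of continuity uniform in $(s,m)$. The reason is that the mean-value expansion leaves the term $\int_0^1\big\langle\nabla b(\cdot,P_s^*\pi_\varepsilon)(X^\mu_s+\theta(Y_s-X^\mu_s))-\nabla b(\cdot,P_s^*\pi_\varepsilon)(X^\mu_s),\,H^\varepsilon_s-\frac st H^\varepsilon_t\big\rangle\,\mathrm{d}\theta$. The paper's $I_5$ computation silently drops this term.
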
	
\begin{proof}
We will use Girsanov's transform as in the proof of Lemma \ref{WP}.
By the definition of $\eta_{s,t}^{\varepsilon,\mu,\nu}$,	$\phi^{\varepsilon}(s)$	in the proof of Lemma \ref{WP} is formulated as $\phi^{\varepsilon}(s)=\varepsilon\eta_{s,t_0}^{\varepsilon,\mu,\nu}$.
With reference to the proof of \cite[Lemma 2.2]{PPED}, we have

\begin{equation*}
\frac{1}{\varepsilon}[b_s(X^{\mu}_{s},P^{*}_s\mu)-b_s(Y_s,P^{*}_s{\pi_\varepsilon})]-J^{\varepsilon}_s -\left\langle \nabla b(\cdot, P_{s}^{*}\mu)(X^{\mu}_{s}), H^{\varepsilon}_s-\frac{s}{t}H^{\varepsilon}_t \right\rangle=I(\varepsilon, s),\end{equation*}
where
\begin{equation*}
\begin{aligned}
I(\varepsilon, s) &:= I_1(\varepsilon, s) + I_2(\varepsilon, s) + I_3(\varepsilon, s) + I_4(\varepsilon, s)+I_5(\varepsilon, s), \\	I_1(\varepsilon, s) &:= \int_0^1 \mathrm{d}r \int_{\mathbb{R}^d} \left[ \tilde{D}^E b_s(X_t^\mu,  (1-r)P_s^* \pi_\varepsilon + r P_s^* \mu) \right. \\	&\qquad - \left. \tilde{D}^E b_s(X_s^\mu, P_s^* \mu) \right] \mathrm{d}\left( (P_s^{\pi_\varepsilon})^* (\mu - \nu) \right), \\I_2(\varepsilon, t) &:= \int_0^1 \mathrm{d}r \int_{\mathbb{R}^d} \tilde{D}^E b_s(X_s^\mu, P_s^* \mu) \mathrm{d}\left( (P_s^{\pi_\varepsilon})^* (\mu - \nu) - (P_s^\mu)^* (\mu - \nu) \right), \\
I_3(\varepsilon, s) &:= \frac{1}{\varepsilon} \int_0^1 \mathrm{d}r \int_{\mathbb{R}^d} \left[ \tilde{D}^E b_s(X_s^\mu, (1 - r) P_s^* \pi_\varepsilon + r P_s^* \mu) \right. \\
&\qquad - \left. \tilde{D}^E b_s(X_s^\mu, P_s^* \mu) \right] \mathrm{d}\left( (P_s^\mu)^* \mu - (P_s^{\pi_\varepsilon})^* \mu \right), \\
I_4(\varepsilon, s) &:= \frac{1}{\varepsilon} \int_{\mathbb{R}^d} \tilde{D}^E b_s(X_s^\mu, P_s^* \mu) \mathrm{d}\left( (P_s^\mu)^* \mu - (P_s^{\pi_\varepsilon})^* \mu \right)\\
&\qquad +\E\left[\tilde{D}^Eb_{s}(z,P^{*}_s\mu)(X_{s}^{\mu})\int_{0}^{s}\left<\frac{1}{\lambda}\eta_{r,s}^{\varepsilon,\mu,\nu},\d W_r\right>\right]_{z=X_{s}^{\mu}},\\
I_5(\varepsilon, s) &:= \frac{1}{\varepsilon}[b_s(X_s^{\mu},P^{*}_s{\pi_{\varepsilon}})-b_s(Y_s,P^{*}_s{\pi_{\varepsilon}})]-\left\langle \nabla b(\cdot, P_{s}^{*}\pi_{\varepsilon})(X_{s}^{\mu}), H^{\varepsilon}_s-\frac{s}{t}H^{\varepsilon}_t \right\rangle.
\end{aligned}
\end{equation*}
and 	\beq\label{I3} 
\lim\limits_{\varepsilon\downarrow 0}\sum^{4}_{i=1}\sup_{s\in(0,T]}{|I_i(\varepsilon,s)|}=0.
\end{equation}	
Using BDG's inequality, there exists a constant $C_4>0$ such that 
\begin{equation}
\begin{aligned}
& \E\left[\sup_{t\in(0,T]}
    \frac{1}{t}\left|\frac{\xi^{\pi_{\varepsilon}}(t)-\xi^{\mu}(t)}{\varepsilon}-H^{\varepsilon}_t\right|^2\right] 
\nonumber\\   
&\le \frac{C_4}{t}\int_{0}^{T}
\Bigg\{\frac{1}{\varepsilon}
[\sigma_r(P_r^{*}\pi_{\varepsilon})-\sigma_r(P_r^{*}\mu)]
 - P_{r}^{\mu}\big(\tilde{D}^E\sigma_{r}(P^{*}_{r}\mu)(\nu)-\tilde{D}^E\sigma_{r}(P^{*}_{r}\mu)(\mu)\big)
\Bigg\}^2 \, \mathrm{d}r.
\label{o1}       
\end{aligned}
\end{equation}
Similar to  \cite[Lemma 2.2]{PPED}, we have
\begin{equation*}\begin{aligned}
&\quad\frac{\sigma_r(P_r^{*}\pi_{\varepsilon})-\sigma_r(P_r^{*}\mu)}{\varepsilon} -P_{r}^{\mu}\left(\tilde{D}^E\sigma_{r}(P^{*}_{r}\mu)(\nu)-\tilde{D}^E\sigma_{r}(P^{*}_{r}\mu)(\mu)\right)\\
&\qquad-\E\big[\tilde{D}^E\sigma_{r}(P^{*}_r\mu)(X_{r}^{\mu})\int_{0}^{r}\langle\frac{1}{\lambda}\eta_{s,r}^{\varepsilon,\mu,\nu},\d W_s\rangle\big]\\
&=o_r(\varepsilon).\end{aligned}\end{equation*}
Combining this with \eqref{o1}, we derive
\begin{equation}\label{xih}\begin{aligned}
\lim_{\varepsilon\downarrow 0}\E\left[\sup_{t\in(0,T]}\frac{1}{t}\left|\frac{\xi^{\pi_{\varepsilon}}(t)-\xi^{\mu}(t)}{\varepsilon}-H^{\varepsilon}_t\right|^2\right]=0.
\end{aligned}\end{equation}
Recall that
\[
I_5(\varepsilon, s)
:=
\frac{1}{\varepsilon}
\Big(
b_s(X_s^{\mu},P_s^*\pi_\varepsilon)
-
b_s(Y_s,P_s^*\pi_\varepsilon)
\Big)
-
\Big\langle
\nabla b(\cdot,P_s^*\pi_\varepsilon)(X_s^{\mu}),
H_s^\varepsilon-\frac{s}{t}H_t^\varepsilon
\Big\rangle .
\]
Since for each $\nu\in\mathscr P_k$ the map $x\mapsto b_s(x,\nu)$ is continuously
differentiable, we have
\[
\begin{aligned}
b_s(Y_s,P_s^*\pi_\varepsilon)-b_s(X_s^{\mu},P_s^*\pi_\varepsilon)
&=
\int_0^1
\Big\langle
\nabla b(\cdot,P_s^*\pi_\varepsilon)
\big(X_s^{\mu}+\theta(Y_s-X_s^{\mu})\big),
Y_s-X_s^{\mu}
\Big\rangle
\,\mathrm d\theta .
\end{aligned}
\]
Consequently,
\[
\begin{aligned}
I_5(\varepsilon,s)
&=
\int_0^1
\Big\langle
\nabla b(\cdot,P_s^*\pi_\varepsilon)
\big(X_s^{\mu}+\theta(Y_s-X_s^{\mu})\big),
\frac{Y_s-X_s^{\mu}}{\varepsilon}
-
\Big(H_s^\varepsilon-\frac{s}{t}H_t^\varepsilon\Big)
\Big\rangle
\,\mathrm d\theta .
\end{aligned}
\]
\eqref{hh2} implies 
\[
\sup_{\varepsilon\in(0,1)}
\mathbb E
\Big|
\nabla b(\cdot,P_s^*\pi_\varepsilon)
\big(X_s^{\mu}+\theta(Y_s-X_s^{\mu})\big)
\Big|^2
\le K^2 ,
\qquad \theta\in[0,1].
\]
Moreover, by the definition of $H^\varepsilon$, we have
\[
\lim_{\varepsilon\downarrow0}
\mathbb E
\Big|
\frac{Y_s-X_s^{\mu}}{\varepsilon}
-
\Big(H_s^\varepsilon-\frac{s}{t}H_t^\varepsilon\Big)
\Big|^2
=0 .
\]
Applying the Cauchy--Schwarz inequality, we obtain
\[
\begin{aligned}
\mathbb E|I_5(\varepsilon,s)|^2
&\le
\int_0^1
\Big(
\mathbb E
\big|
\nabla b(\cdot,P_s^*\pi_\varepsilon)
\big(X_s^{\mu}+\theta(Y_s-X_s^{\mu})\big)
\big|^2
\Big)  \\
&\qquad\qquad \times
\Big(
\mathbb E
\Big|
\frac{Y_s-X_s^{\mu}}{\varepsilon}
-
\Big(H_s^\varepsilon-\frac{s}{t}H_t^\varepsilon\Big)
\Big|^2
\Big)
\,\mathrm d\theta \\
&\le
K^2\mathbb E
\Big|
\frac{Y_s-X_s^{\mu}}{\varepsilon}
-
\Big(H_s^\varepsilon-\frac{s}{t}H_t^\varepsilon\Big)
\Big|^2
.
\end{aligned}
\]
Therefore,
\[
\lim_{\varepsilon\downarrow0}\mathbb E|I_5(\varepsilon,s)|^2=0,
\]
which together with \eqref{I3} and  \eqref{xih} implies \eqref{ETA2}.	
\end{proof}

{{\begin{rem}
Lemmas~\ref{WP} and \ref{ETA1} provide the main preparatory estimates for the Bismut-type formula
of the extrinsic derivative.

Lemma~\ref{WP} establishes a weighted total variation stability of the nonlinear semigroup with respect
to the measure argument.

Lemma~\ref{ETA1} is a refined first-order expansion underlying the extrinsic derivative
formula in \cite{PPED}. The proof follows the  coupling strategy
as in \cite[Lemma~2.2]{PPED}, but in our setting the diffusion coefficient is also
distribution dependent. Hence the coupling requires the additional noise compensator
\(
\xi^\mu(t)=\int_0^t\sigma_s(P_s^*\mu)\,dB_s
\)
and the correction term
\(
\frac{1}{t}\big(\xi^{\pi_\varepsilon}(t)-\xi^\mu(t)\big).
\)
Lemma~\ref{ETA1} shows that the rescaled Girsanov drift
$\eta_{s,t}^{\varepsilon,\mu,\nu}$ admits the explicit decomposition \eqref{ETA2},
where the leading terms are separated into an extrinsic part $J_s^\varepsilon$
(involving $\tilde D^E b$) and a fluctuation part driven by $H^\varepsilon$,
with remainders $o_s(\varepsilon)$ and $o_t(\varepsilon)$ vanishing in $L^2$ uniformly on $(0,T]$.
\end{rem}}}

\subsection{Proof of main theorem}
\begin{proof}[Proof of Theorem \ref{MR}]
Consider
\begin{equation}\label{eta2}\begin{aligned}
\eta_{s,t}
&=J_s+\left\langle \nabla b(\cdot, P_{s}^{*}\mu)(X_{s}^{\mu}), H_s-\frac{s}{t}H_t \right\rangle+\frac{1}{t}H_t,
\end{aligned}\end{equation}
where
 \begin{equation*}\begin{aligned}
 J_s:=&P_{s}^{\mu}\left(\tilde{D}^Eb_{s}(X^{\mu}_{s},P^{*}_{s}\mu)(\mu)-\tilde{D}^Eb_{s}(X^{\mu}_{s},P^{*}_{s}\mu)(\nu)\right)\\
 &-\E\left[\tilde{D}^Eb_{s}(z,P^{*}_s\mu)(X^{\mu}_{s})\int_{0}^{s}\left<\frac{1}{\lambda}\eta_{r,s},\d W_r\right>\right]_{z=X^{\mu}_{s}},\\
 H_t:=&\int_{0}^{t}P_{r}^{\mu}\left(\tilde{D}^E\sigma_{r}(P^{*}_{r}\mu)(\mu)-\tilde{D}^E\sigma_{r}(P^{*}_{r}\mu)(\nu)\right)\d B_r\\
 &-\int_{0}^{t}\E\left[\tilde{D}^E\sigma_{r}(P^{*}_r\mu)(X_{r}^{\mu})\int_{0}^{r}\left<\frac{1}{\lambda}\eta_{s,r},\d W_s\right>\right]\d B_r.\\
 \end{aligned}\end{equation*}
The operator \(\Phi:\mathscr M_t\to\mathscr M_t\) is defined by the right-hand side of \eqref{eta2}, i.e.
\begin{equation*}\begin{aligned}
\Phi(\eta)_{s,t}
&:=J_s+\left\langle \nabla b(\cdot, P_{s}^{*}\mu)(X_{s}^{\mu}), H_s-\frac{s}{t}H_t \right\rangle+\frac{1}{t}H_t,
\end{aligned}\end{equation*}
we will show \(\Phi(\mathscr M_t)\subset\mathscr M_t\). By \eqref{hh3} and \eqref{M6}, using It\^o isometry, we find a constant $C_5 > 0$ such that
\begin{equation} \label{3.0}
\begin{aligned}
\E\lvert\Phi(\eta)_{s,t} \rvert ^2
\leq& C_5  \bigl( 1 + (\mu + \nu)(\lvert \cdot \rvert^k) \bigr)^2\\
& +\frac{C_5}{{\lambda}^2t^2}\bigl( 1 + \mu(\lvert \cdot \rvert^k) \bigr)^2\left[\int_{0}^{s}\E| \eta_{r,s} |^2 \d r+\int_{0}^{t}\d r \int_{0}^{r}\E| \eta_{u,r} |^2 \d u\right],\\
\end{aligned}\end{equation}
and then there exists a  constant $C_6(\mu,\nu)>0$ increasing in $(\mu+\nu)(|\cdot|^k)$ such that	
\begin{equation*}
\sup_{0\le s\le t}\E\int_0^s|\Phi(\eta)_{r,s}|^2\d r
\le C_6(\mu,\nu)\big(1+\|\eta\|_{\mathscr M_t}^2\big)<\infty.
\end{equation*}
Thus \(\Phi(\eta)\in\mathscr M_t\).
Similarly, for any  $\eta_{s,t}$ and $\tilde{\eta}_{s,t}$ satisfying \eqref{eta2}, we have
\[
\begin{aligned}
\eta_{s,t}-\tilde{\eta}_{s,t} =
& \mathbb{E} \left[ \tilde{D}^E b_s(z, P_t^* \mu)(X^{\mu}_{s}) \int_0^s \langle \eta_{r,s}-\tilde{\eta}_{r,s}, \mathrm{d}W_r \rangle\right] \\
&+\left\langle \nabla b(\cdot, P_{s}^{*}\mu)(X_{s}^{\mu}), \int_{0}^{s} \mathbb{E} \Big[ \tilde{D}^E \sigma_r ( P_r^* \mu)(X_{r}^{\mu})\int_0^r \langle \eta_{u,r}-\tilde{\eta}_{u,r}, \mathrm{d}W_u \rangle \Big]\mathrm{d}B_r\right\rangle\\
&-\left\langle \nabla b(\cdot, P_{s}^{*}\mu)(X_{s}^{\mu}),\frac{s}{t}\int_{0}^{t} \mathbb{E} \Big[ \tilde{D}^E \sigma_r ( P_r^* \mu)(X_{r}^{\mu})\int_0^r \langle \eta_{u,r}-\tilde{\eta}_{u,r}, \mathrm{d}W_u \rangle \Big]\mathrm{d}B_r \right\rangle\\
&+\frac{1}{t}\int_{0}^{t} \mathbb{E} \Big[ \tilde{D}^E \sigma_r ( P_r^* \mu)\int_0^r \langle \eta_{u,r}-\tilde{\eta}_{u,r}, \mathrm{d}W_u\rangle \Big]\mathrm{d}B_r.
\end{aligned}
\]
Set
\[
\zeta_{s,t}:=\eta_{s,t}-\tilde\eta_{s,t},\qquad 0\le s\le t\le T.
\]
Fix $\varepsilon\in(0,T]$ and $\theta>0$. Define the $\varepsilon$--truncated weighted seminorm
\begin{equation}\label{eq:norm-eps}
\|\zeta\|_{\theta,\varepsilon}^2
:=
\sup_{\varepsilon\le t\le T} e^{-\theta t}\int_0^t \E|\zeta_{s,t}|^2\,\mathrm ds.
\end{equation}
By the previous estimate, for all $0< s\le t\le T$,
\begin{equation}\label{eq:base}
\E|\zeta_{s,t}|^2
\le
\frac{C_7}{\lambda^2 t^2}
\Bigg(
\int_0^t \mathrm dr\int_0^r \E|\zeta_{u,r}|^2\,\mathrm du
+
\int_0^s \E|\zeta_{r,s}|^2\,\mathrm dr
\Bigg),
\end{equation}
where $C_7>0$ is independent of $\theta$ and $\varepsilon$.
Fix $t\in[\varepsilon,T]$. Integrating \eqref{eq:base} over $s\in[0,t]$ yields
\begin{align}\label{eq:int-s}
\int_0^t \E|\zeta_{s,t}|^2\,\mathrm ds
&\le
\frac{C_7}{\lambda^2 t^2}
\Bigg(
t\int_0^t \mathrm dr\int_0^r \E|\zeta_{u,r}|^2\,\mathrm du
+
\int_0^t\mathrm ds\int_0^s \E|\zeta_{r,s}|^2\,\mathrm dr
\Bigg).
\end{align}
Multiplying \eqref{eq:int-s} by $e^{-\theta t}$,  we have
\begin{align*}
e^{-\theta t}\frac{C_7}{\lambda^2 t^2}\,t\int_\varepsilon^t \mathrm dr\int_0^r \E|\zeta_{u,r}|^2\,\mathrm du
&\le
e^{-\theta t}\frac{C_7}{\lambda^2 t}\int_\varepsilon^t e^{\theta r}\,\mathrm dr\,\|\zeta\|_{\theta,\varepsilon}^2 \\
&=
\frac{C_7}{\lambda^2 t}\int_\varepsilon^t e^{-\theta(t-r)}\,\mathrm dr\,\|\zeta\|_{\theta,\varepsilon}^2 \\
&\le
\frac{C_7}{\lambda^2 t}\cdot \frac{1}{\theta}\,\|\zeta\|_{\theta,\varepsilon}^2
\le
\frac{C_7}{\lambda^2\theta}\cdot\frac{1}{\varepsilon}\,\|\zeta\|_{\theta,\varepsilon}^2,
\end{align*}
and
\begin{align*}
e^{-\theta t}\frac{C_7}{\lambda^2 t^2}\int_\varepsilon^t\mathrm ds\int_0^s \E|\zeta_{r,s}|^2\,\mathrm dr
&\le
\frac{C_7}{\lambda^2 t^2}\cdot\frac{1}{\theta}\,\|\zeta\|_{\theta,\varepsilon}^2
\le
\frac{C_7}{\lambda^2\theta}\cdot\frac{1}{\varepsilon^2}\,\|\zeta\|_{\theta,\varepsilon}^2,
\end{align*}
where we used $\int_\varepsilon^t e^{-\theta(t-r)}\,\mathrm dr\le \int_0^t e^{-\theta(t-r)}\,\mathrm dr\le \frac1\theta$
and $t\ge\varepsilon$.
Combining the above bounds with \eqref{eq:int-s}, we obtain for every $t\in[\varepsilon,T]$,
\[
e^{-\theta t}\int_0^t \E|\zeta_{s,t}|^2\,\mathrm ds
\le
\frac{C_7}{\lambda^2\theta}\left(\frac{1}{\varepsilon}+\frac{1}{\varepsilon^2}\right)
\|\zeta\|_{\theta,\varepsilon}^2.
\]
Taking the supremum over $t\in[\varepsilon,T]$ yields
\begin{equation}\label{eq:contraction}
\|\zeta\|_{\theta,\varepsilon}^2
\le
\frac{C_7}{\lambda^2\theta}\left(\frac{1}{\varepsilon}+\frac{1}{\varepsilon^2}\right)
\|\zeta\|_{\theta,\varepsilon}^2.
\end{equation}
Choose $\theta$ so large that
\[
\frac{C_7}{\lambda^2\theta}\left(\frac{1}{\varepsilon}+\frac{1}{\varepsilon^2}\right)<1.
\]
Then \eqref{eq:contraction} implies $\|\zeta\|_{\theta,\varepsilon}=0$, i.e.
\[
e^{-\theta t}\int_0^t \E|\eta_{s,t}-\tilde\eta_{s,t}|^2\,\mathrm ds=0,
\qquad \forall\,t\in[\varepsilon,T].
\]
Hence $\E|\eta_{s,t}-\tilde\eta_{s,t}|^2=0$ for a.e.\ $s\in[0,t]$ and all $t\in[\varepsilon,T]$. Given an arbitrary
$t\in(0,T]$, we may choose $\varepsilon\in(0,t)$.
Then the uniqueness on $[\varepsilon,T]$ implies
$\eta_{s,t}=\tilde\eta_{s,t}$ a.s.\ for all $0\le s\le t$.
Therefore $\eta=\tilde\eta$ almost surely on $\{(s,t):0\le s\le t\le T\}$.

 Let $\mu, \nu \in \mathscr{P}_k$. Theorem \ref{MR}(1) combined with Lemma \ref{ETA1} yields
\begin{equation} \label{3.2}
\lim_{\varepsilon \downarrow 0} \sup_{t \in [0, T]}\sup_{s\leq t} {\E|\eta_{s,t}^{\varepsilon, \mu, \nu} - \eta_{s,t}^{\mu, \nu}|^2} = 0.
\end{equation}
For $\varepsilon \in (0, 1)$, set $\pi_\varepsilon = (1 - \varepsilon)\mu + \varepsilon\nu$ as previously. We then write
\begin{equation} \label{3.3}
\begin{aligned}
\frac{P_t f(\pi_\varepsilon) - P_t f(\mu)}{\varepsilon} 
&= \frac{1}{\varepsilon} \int_{\mathbb{R}^d} f \,\mathrm{d}(P_t^{\pi_\varepsilon} - P_t^\mu) \\
&= \frac{1}{\varepsilon} \int_{\mathbb{R}^d} f \,\mathrm{d}\bigl((P_t^{\pi_\varepsilon})^* \mu - (P_t^\mu)^* \mu\bigr) 
   + \int_{\mathbb{R}^d} f \,\mathrm{d}\bigl((P_t^{\pi_\varepsilon})^* (\nu - \mu)\bigr).
\end{aligned}
\end{equation}
For any $f \in \mathscr{D}_k$, Lemma \ref{WP} provides the convergence
\begin{equation} \label{3.4}
\lim_{\varepsilon \downarrow 0} \int_{\mathbb{R}^d} f \,\mathrm{d}\bigl((P_t^{\pi_\varepsilon})^* (\nu - \mu)\bigr) 
= \int_{\mathbb{R}^d} f \,\mathrm{d}\bigl((P_t^\mu)^* (\nu - \mu)\bigr) 
= \int_{\mathbb{R}^d} P_t^\mu f \,\mathrm{d}(\nu - \mu).
\end{equation}
We now turn to the first term in \eqref{3.3}. Consider the process $R_t^\varepsilon$ defined in the proof of Lemma \ref{ETA1}. Then $\sup_{\varepsilon\in(0,1)}\E|\eta_{s,t}^{\varepsilon,\mu,\nu}|^2\leq {C(\mu,\nu,T)}$ and \eqref{M6}  imply
\[
\begin{aligned}
\sup_{\varepsilon \in (0, 1)} \mathbb{E} \Bigl[ \bigl\lvert f(X_t^\mu) \bigr\rvert 
\bigl\lvert \tfrac{R_t^\varepsilon - 1}{\varepsilon} \bigr\rvert \Bigr]
&\leq \sup_{\varepsilon \in (0, 1)} \mathbb{E} \Bigl( 
\bigl( \mathbb{E} \bigl[ \lvert f(X_t^\mu) \rvert^2 \bigm\vert \mathscr{F}_0 \bigr] \bigr)^{\frac{1}{2}} 
\bigl( \mathbb{E} \bigl\lvert \tfrac{R_t^\varepsilon - 1}{\varepsilon} \bigr\rvert^2 \bigm\vert \mathscr{F}_0 \bigr)^{\frac{1}{2}} \Bigr) < \infty.
\end{aligned}
\]
Hence, by the dominated convergence theorem together with \eqref{3.2},
\[
\begin{aligned}
&\lim_{\varepsilon \downarrow 0} \frac{1}{\varepsilon} \int_{\mathbb{R}^d} f \,\mathrm{d}\bigl((P_t^{\pi_\varepsilon})^* \mu - (P_t^\mu)^* \mu\bigr) \\
&= \lim_{\varepsilon \downarrow 0} \mathbb{E} \Bigl[ f(X_t^\mu) \frac{R_t^\varepsilon - 1}{\varepsilon} \Bigr] 
= \mathbb{E} \Bigl[ f(X_t^\mu) \int_0^t \langle \eta_{s,t}^{\mu, \nu}, \mathrm{d}W_s \rangle \Bigr].
\end{aligned}
\]
Combining this limit with \eqref{3.3} and \eqref{3.4} establishes the formula \eqref{DENU}.
To obtain a quantitative bound, we employ \eqref{DENU} together with the estimates \eqref{M5}, \eqref{M6} and \eqref{US}. There exists a positive constant $C_8$ such that
\[
\begin{aligned}
\bigl\lvert \tilde{D}^E_{\nu} P_t f(\mu) \bigr\rvert 
&\leq  \int_{\mathbb{R}^d} \bigl(1 + \lvert x \rvert^k\bigr) \,\mathrm{d}\bigl((P_t^\mu)^* (\mu + \nu)\bigr) 
   +  \mathbb{E} \Bigl[ \bigl(1 + \lvert X_t^\mu \rvert^k\bigr) \int_0^t \langle \eta_{s,t}^{\mu, \nu}, \mathrm{d}W_s \rangle \Bigr] \\
&\leq C_8 \bigl(1 + (\mu + \nu)(\lvert \cdot \rvert^k)\bigr) \\
&\quad + \mathbb{E} \Bigl( \mathbb{E} \bigl[(1 + \lvert X_t^\mu \rvert^k)^2 \bigm\vert \mathscr{F}_0 \bigr]^{\frac{1}{2}} 
   \Bigl( \int_0^t  \mathbb{E}|\eta_{s,t}^{\mu, \nu}|^2 \,\mathrm{d}s \Bigr)^{\frac{1}{2}} \Bigr).
\end{aligned}
\]
 By \eqref{3.0}, $\eta_{s,t}^{\mu,\nu}$ satisfies
\[
\mathbb{E}\bigl[|\eta_{s,t}^{\mu,\nu}|^2\bigr] \le C_5 \bigl(1 + (\mu + \nu)(|\cdot|^k)\bigr)^2
+ \frac{C_5}{\lambda^2 t^2} \bigl(1 + \mu(|\cdot|^k)\bigr)^2 \left[\int_0^s \mathbb{E}|\eta_{r,s}^{\mu,\nu}|^2 \, dr + \int_0^t \int_0^r \mathbb{E}|\eta_{u,r}^{\mu,\nu}|^2 \, du \, dr \right].
\]
Define
\[
f(t) := \sup_{0 \le s \le t} \mathbb{E}|\eta_{s,t}^{\mu,\nu}|^2.
\]
Observing that
\[
\int_0^s \mathbb{E}|\eta_{r,s}^{\mu,\nu}|^2 \, dr + \int_0^t \int_0^r \mathbb{E}|\eta_{u,r}^{\mu,\nu}|^2 \, du \, dr \le 2 \int_0^t \int_0^t f(r) \, dr \, du = 2 t \int_0^t f(r) \, dr,
\]
we obtain
\[
f(t) \le C_5 \bigl(1 + (\mu + \nu)(|\cdot|^k)\bigr)^2 + \frac{2 C_5}{\lambda^2 t} \bigl(1 + \mu(|\cdot|^k)\bigr)^2 \int_0^t f(r) \, dr.
\]
Setting 
\[
\alpha := \frac{2 C_5}{\lambda^2} \bigl(1 + \mu(|\cdot|^k)\bigr)^2,
\]
by Gronwall's inequality, it follows that
\[
f(t) \le C_5 \bigl(1 + (\mu + \nu)(|\cdot|^k)\bigr)^2 \exp(\alpha).
\]
Hence, for all $0 \le s \le t$,
\[
\mathbb{E}|\eta_{s,t}^{\mu,\nu}|^2 \le C_5\bigl(1 + (\mu + \nu)(|\cdot|^k)\bigr)^2 \exp\Bigg( \frac{2 C_5}{\lambda^2} \bigl(1 + \mu(|\cdot|^k)\bigr)^2 \Bigg).
\]
Therefore,
\[
\begin{aligned}
\bigl\lvert \tilde{D}^E_{\nu} P_t f(\mu) \bigr\rvert 
&\leq C_7  \bigl(1 + (\mu + \nu)(\lvert \cdot \rvert^k)\bigr) \\
&\quad + C_9  \bigl(1 + \mu(\lvert \cdot \rvert^k)\bigr)  \bigl( 1 + (\mu + \nu)(\lvert \cdot \rvert^k) \bigr)
   \times \exp\!\Bigl(\frac{C_5}{{\lambda}^2 }\bigl( 1 + \mu(\lvert \cdot \rvert^k) \bigr)^2\Bigr)\, t^{\frac{1}{2}}
\end{aligned}
\]
for a constant $C_9>0$.	Consequently, the derivative estimate \eqref{SED} is valid for an appropriate constant $c > 0$.
\end{proof}

{{\begin{rem}
Our proof is inspired by \cite{PPED}. The main difference lies in the construction of the two-parameter integrand
$\eta_{s,t}$, which is characterised as the unique fixed point of the integral equation \eqref{eta2} on
$\mathscr M_t$, with uniqueness obtained through the weighted seminorm \eqref{eq:norm-eps}.

\end{rem}}}

\section{ Extension}

\subsection{Semi-linear distribution dependent SPDEs}

Having established the extrinsic derivative formula for finite-dimensional DDSDEs,
we now turn to an infinite-dimensional extension in the setting of distribution
dependent stochastic partial differential equations (DDSPDEs) and attempt to extend the result in \cite{PPED} to SPDEs. Although the noise in
this case is given by a cylindrical Brownian motion on a Hilbert space, the strategy
and main ideas developed in the finite-dimensional case continue to play a crucial
role.

Let $(\mathbb{H},\langle\cdot,\cdot\rangle_{\mathbb{H}},|\cdot|_{\mathbb{H}})$ be a separable Hilbert space, and $W = (W_t)_{t\geq0}$ be a cylindrical Brownian motion on $\mathbb{H}$ with respect to a complete filtered probability space $(\Omega, \mathcal{F}, \{\mathcal{F}_t\}_{t\geq 0},\mathbb{P})$. More precisely, $W_t=\sum_{n=1}^{\infty}B_t^ne_n$ for a sequence of independent one-dimensional Brownian motions $\{B_t^n\}_{n\geq 1}$ with respect to $(\Omega, \mathcal{F}, \{\mathcal{F}_t\}_{t\geq 0},\mathbb{P})$ and an orthonormal basis $\{e_n\}_{n\geq 1}$ on $\mathbb{H}$.

Let $\mathscr{P}(\mathbb{H})$ be the collection of all probability measures on $\mathbb{H}$ equipped with the weak topology. For $\mu\in\mathscr{P}(\mathbb{H})$, if $\mu(|\cdot|_{\mathbb{H}}^p):=\int_{\mathbb{H}}|x|_{\mathbb{H}}^p\mu(\d x)<\infty$ for some $p\geq 1$, we write $\mu\in\mathscr{P}_p(\mathbb{H})$.
Let $\B_b(\mathbb{H})$ denote the set of all bounded measurable real-valued functions on $\mathbb{H}$.

Consider the following semi-linear distribution dependent SPDEs on $\mathbb{H}$:
\begin{equation}\label{SPDE}
 \d X_t=\{AX_t+b_t(X_t,\mathscr{L}_{X_t})\}\d t+Q_t(X_t)\d W_t, \quad 
\end{equation}
where $(A,\mathscr{D}(A))$ is a negative definite self-adjoint operator on $\mathbb{H}$, $b:[0,\infty)\times \mathbb{H} \times \mathscr{P}(\mathbb{H})\rightarrow \mathbb{H}$ and $Q:[0,\infty)\times \mathbb{H} \times \mathscr{P}(\mathbb{H})\rightarrow \mathcal{L}(\mathbb{H},\mathbb{H})$ are measurable, where $\mathcal{L}(\mathbb{H},\mathbb{H})$ is the space of bounded linear operators from $\mathbb{H}$ to $\mathbb{H}$.

\begin{defn}\label{def:mild-weak-solution}
A continuous $\mathscr F_t$-adapted process $\{X_t\}_{t\geq0}$ is called a mild solution to  \eqref{SPDE}, if $\P$-a.s
\[
X_t = \mathrm{e}^{At} X_0 + \int_0^t \mathrm{e}^{A(t-s)} b_s(X_s, \mathscr L_{X_s}) \d s + \int_0^t \mathrm{e}^{A(t-s)} Q_s(X_s, \mathscr L_{X_s}) \d W_s, \quad t \geq 0. 
\]
Moreover, if $\E|X_t|^k < \infty$ for any $t \geq 0$, then the solution is said to be in $\mathscr P_k(\mathbb{H})$. \eqref{SPDE} is called strongly well-posed in $\mathscr P_k(\mathbb{H})$, if for any $\mathscr F_0$-measurable random variable $X_0$ with $\mathscr L_{X_0} \in \mathscr P_k(\mathbb{H})$, there exists a unique mild solution in $\mathscr P_k(\mathbb{H})$.

 A couple $(\tilde X_t, \tilde W_t)_{t\geq0}$ is called a weak solution to \eqref{SPDE}, if $\tilde W$ is a cylindrical Brownian motion with respect to a complete filtered probability space $(\tilde{\Omega}, \{\tilde{\mathscr F}_t\}_{t\geq0}, \tilde{\P})$, and \eqref{SPDE}  holds for $(\tilde X_t, \tilde W_t)_{t\geq0}$ in place of $(X_t, W_t)_{t\geq0}$. Moreover, if $\mathscr L_{\tilde{X}_t | \tilde{\P}} \in \mathscr P_k(\mathbb{H})$, the weak solution is called in $\mathscr P_k(\mathbb{H})$.
Furthermore, we say that weak well-posedness in $\mathscr P_k(\mathbb{H})$ for \eqref{SPDE} holds if it has a weak solution from any initial distribution and has weak uniqueness in $\mathscr P_k(\mathbb{H})$.
\end{defn}

When \eqref{SPDE} is well-posed, we are able to calculate $\tilde{D}^EP_tf$. To this end, we make use of the semigroup generated by the decoupled SDE associated with \eqref{SPDE}.
\begin{equation*}
 \d X_t^{\mu,x}=\{AX_t^{\mu,x}+b_t(X_t^{\mu,x},\mu_t)\}\d t+Q_t(X_t^{\mu,x})\d W_t, \quad X_0^{\mu,x} = x, \; t\in[0,T],
\end{equation*}
where $\mu_t = \mathscr{L}_{X_t^{\mu,x}}$ denotes the law of $X_t^{\mu,x}$, $x\in \mathbb{H}$ and the initial distribution $\mu = \mathcal{L}_{X_0} \in \mathscr{P}_k(\mathbb{H})$. For any $\nu \in \mathscr{P}_k(\mathbb{H})$ and any $f\in\B_b(\mathbb{H})$, define
\[
P_t^{\mu}f(\nu):= \int_{\mathbb{H}} \mathbb{E}[f(X_t^{\mu,x})] \, \nu(\d x),\qquad
(P_t^{\mu})^{*}\nu:= \int_{\mathbb{H}} \mathscr{L}_{X_t^{\mu,x}} \, \nu(\d x). 
\]

\begin{defn}
Let $f$ be a continuous function on $\mathscr{P}_k(\H)$.
	\begin{enumerate}
		\item[(1)] We call $f$ extrinsically differentiable, if for any $\mu\in\mathscr{P}_k(\H)$, the convex derivative 
		$$
		\tilde{D}^Ef(\mu)(x) := \lim_{\varepsilon\downarrow 0}\frac{f\left((1-\varepsilon)\mu + \varepsilon\delta_x\right)-f(\mu)}{\varepsilon} \in \mathbb{R}, \quad x\in\H
		$$
		exists, where $\delta_x$ is the Dirac measure at $x$.
		
		\item[(2)] We denote $f\in C^{E,1}(\mathscr{P}_k(\H))$ if $f$ is extrinsically differentiable, and the map
		$$
		(x,\mu)\in\H\times\mathscr{P}_k(\H) \mapsto \tilde{D}^Ef(\mu)(x)
		$$
		is continuous.
		
		\item[(3)] We write $f\in C^{E,1}_K(\mathscr{P}_k(\H))$, if $f\in C^{E,1}(\mathscr{P}_k(\H))$ and for any compact set $\mathscr{K}\subset\mathscr{P}_k(\H)$, there exists a constant $c>0$ such that 
		$$
		\sup_{\mu\in\mathscr{K}}|\tilde{D}^Ef(\mu)(x)| \leq c(1+|x|_{\H}^k), \quad x\in\H.
		$$
	\end{enumerate}
\end{defn}
We make the following assumptions.
\begin{enumerate}
    \item[{\bf(A)}] Let $k \in [0, \infty)$. The following conditions hold.
    
    \begin{enumerate}
        \item[{\bf(A1)}] For any $\gamma_{\cdot} \in C([0,T]; \mathscr{P}_k(\H))$, the SPDE
        \begin{equation*}
            \d X_t^{\gamma_{\cdot},x} = \{ A X_t^{\gamma_{\cdot},x} + b_t(X_t^{\gamma_{\cdot},x}, \gamma_t) \} \d t + Q_t(X_t^{\gamma_{\cdot},x}) \d W_t, \quad t \in [0,T],
        \end{equation*}
        is well-posed, and there exist constants $c > 0$ and $p \geq k$ independent of $\gamma_{\cdot}$ such that
        \begin{equation*}
            \mathbb{E}[ |X_t^{\gamma_{\cdot},x}|_{\H}^p ] \leq c \left( 1 + |x|_{\H}^p + \int_0^t \gamma_s(|\cdot|_{\H}^k)^{\frac{p}{k}} \d s \right), \quad t \in [0,T], \; x \in \H,
        \end{equation*}
        where we interpret $\gamma_s(|\cdot|_{\H}^k)^{\frac{p}{k}} := 1$ when $k = 0$.
        
        \item[{\bf(A2)}] The maps 
        $$ (t,x) \mapsto Q_t(x) \in \mathcal{L}(\H; \H), \qquad (t,x,\mu) \mapsto b_t(x,\mu) \in \H $$
        are continuous, and for each $T > 0$,
        $$ \sup_{t \in [0,T]} \sup_{(x,\mu) \in \H \times \mathscr{P}_k(\H)} \big( |b_t(x,\mu)|_{\H} + \| Q_t(x) \|_{\mathcal{L}(\H;\H)} \big) < \infty. $$
        
        \item[{\bf(A3)}] $b_t(x,\mu)$ admits a decomposition
        \begin{equation*}
            b_t(x,\mu) = b_t^{(0)}(x) + Q_t(x) \, b_t^{(1)}(x,\mu),
        \end{equation*}
        where 
        $$ b^{(0)} : [0,T] \times \H \to \H, \qquad b^{(1)} : [0,T] \times \H \times \mathscr{P}_k(\H) \to \H $$
        are measurable, and for each $t,x$, the function $b_t^{(1)}(x,\cdot)$ belongs to $C^{E,1}(\mathscr{P}_k(\H))$. Moreover, there exists $K \in L^2([0,T]; (0,\infty))$ such that the following hold for all $t \in [0,T]$, $x,y \in \H$, and $\mu,\nu \in \mathscr{P}_k(\H)$:
        \begin{align*}
\inf_{c \in \mathbb{R}} \bigl| \tilde{D}^E b_t^{(1)}(x,\mu)(y) - c \bigr| 
    &\leq K_t \, (1 + |y|_{\H}^k), \\
\bigl| \tilde{D}^E b_t^{(1)}(x,\mu)(y) - \tilde{D}^E b_t^{(1)}(x,\nu)(y) \bigr| 
    &\leq K_t \, \alpha\bigl( \|\mu-\nu\|_{k,\mathrm{var}} \bigr) \\
    &\quad \times \bigl( 1 + |y|_{\H}^k + \mu(|\cdot|_{\H}^k) + \nu(|\cdot|_{\H}^k) \bigr),
\end{align*}
        where $\alpha : (0,\infty) \to (0,\infty)$ is an increasing function with $\alpha(\varepsilon) \to 0$ as $\varepsilon \to 0$.
    \end{enumerate}
\end{enumerate}

Let $\mathscr{D}_k(\H)$ denote the set of measurable functions $f : \H \to \R$ for which there exists a constant $c > 0$ such that
$$ |f(x)| \leq c \, (1 + |x|_{\H}^k), \qquad x \in \H. $$

\begin{thm}\label{MR3}
    Assume {\bf (A)}. Then the corresponding assertions in Theorem \ref{MR} hold.
\end{thm}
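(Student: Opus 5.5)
The plan is to transfer the scheme of Theorem~\ref{MR} to the mild-solution setting. The role of the additive noise $\lambda\,\mathrm dW_t$ is now played by the decomposition $b_t=b^{(0)}_t+Q_t b^{(1)}_t$ in {\bf(A3)}. The distribution dependence enters only through $b^{(1)}$, and it sits inside the range of the noise coefficient $Q_t(x)$. So the Girsanov drift can be taken directly as
\[
\phi^\varepsilon(s):=b^{(1)}_s(X_s,P_s^*\mu)-b^{(1)}_s(X_s,P_s^*\pi_\varepsilon),
\]
and no compensator $\xi$ or correction term $\frac1t(\xi^{\pi_\varepsilon}(t)-\xi^\mu(t))$ is needed. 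Consequently the two-parameter object $\eta_{s,t}$ reduces to a one-parameter $\eta_s$. I will also keep the operator $Q_t$ independent of the measure, as the statement of {\bf(A)} suggests, so the fixed-point equation for $\eta$ keeps only the $J$-type term.

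\textbf{Well-posedness.} By {\bf(A1)} the frozen equation is well-posed with the stated $p$-moment bound, and this gives an invariant set $\mathscr C^\gamma_N$ exactly as in Lemma~\ref{WP1}. For $\mu^1,\mu^2\in\mathscr C^\gamma_N$, I drive the frozen equation for $\mu^1$ with a Girsanov shift $b^{(1)}(X_s,\mu^1_s)-b^{(1)}(X_s,\mu^2_s)$. This turns its solution into a solution of the frozen equation for $\mu^2$ under $\mathbb Q$. Weak uniqueness (from {\bf(A1)}) and the H\"older/BDG bound used in \eqref{Kvar}--\eqref{Rterm} then give
\[
\|\Phi_t(\mu^1)-\Phi_t(\mu^2)\|_{k,\mathrm{var}}\le C(N)\Big(\int_0^t K_s^2\|\mu^1_s-\mu^2_s\|^2_{k,\mathrm{var}}\,\mathrm ds\Big)^{1/2}.
\]
Here the Lipschitz bound on $b^{(1)}$ in the measure comes from \cite[Lemma~2.1]{PPED} applied to the first inequality in {\bf(A3)}. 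Since no Wasserstein term appears, the weighted metric $\sup_t \mathrm e^{-\theta t}\|\cdot\|_{k,\mathrm{var}}$ with $\theta$ large yields a contraction, because $K\in L^2$. The moment bound follows from {\bf(A1)} at the fixed point. The analogue of Lemma~\ref{WP}, namely $\|(P_t^{\pi_\varepsilon})^*\gamma-(P_t^\mu)^*\gamma\|_{k,\mathrm{var}}\le C(\mu,\nu)(1+\gamma(|\cdot|^k))\varepsilon$, follows by the same Girsanov argument combined with Gronwall.

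\textbf{Derivative formula.} I set $\eta^\varepsilon_s:=\varepsilon^{-1}\phi^\varepsilon(s)$. The expansion \eqref{eq:flat-FTC} and the decomposition $P_s^*\pi_\varepsilon-P_s^*\mu=(P_s^{\pi_\varepsilon})^*\mu-(P_s^\mu)^*\mu+\varepsilon (P_s^{\pi_\varepsilon})^*(\nu-\mu)$ show, as in Lemma~\ref{ETA1}, that $\eta^\varepsilon_s=J^\varepsilon_s+o_s(\varepsilon)$. Here $J^\varepsilon_s$ is given by
\[
P_s^\mu\big(\tilde D^Eb^{(1)}_s(X_s,P_s^*\mu)\big)(\nu-\mu)+\mathbb E\Big[\tilde D^Eb^{(1)}_s(z,P_s^*\mu)(X_s^\mu)\int_0^s\langle\eta^\varepsilon_r,\mathrm dW_r\rangle\Big]_{z=X_s}.
\]
The $\varepsilon^{-1}$-difference of the pushforwards is rewritten via Girsanov as $\mathbb E[g(X_s)(R_s^\varepsilon-1)/\varepsilon]$. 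The error terms analogous to $I_1,\dots,I_4$ vanish by the continuity condition in {\bf(A3)}, the moment bound, and the estimate just proved. The limiting equation $\eta_s=J_s$ has a unique solution in $L^2(\Omega\times[0,T])$. The key estimate is $\mathbb E|\eta_s|^2\le C+C K_s^2(1+\mu(|\cdot|^k))^2\int_0^s\mathbb E|\eta_r|^2\,\mathrm dr$, which gives existence by Picard iteration and uniqueness by Gronwall; this is simpler than the two-parameter seminorm argument. The same Gronwall bound gives the analogue of \eqref{US}. Passing to the limit in $\varepsilon^{-1}(P_tf(\pi_\varepsilon)-P_tf(\mu))$ by dominated convergence then yields \eqref{DENU}, with $\frac1\lambda\eta_{s,t}$ replaced by $\eta_s$. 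Finally, \eqref{SED} follows from Cauchy--Schwarz together with the $p$-moment bound.

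\textbf{Main obstacle.} The hardest step is justifying Girsanov and weak uniqueness for mild solutions driven by cylindrical $W$. The drift shift $Q_s(X_s)\phi^\varepsilon(s)$ must be absorbed into $\mathrm dW$; because it lies in the range of $Q$, this works with $\tilde W_t=W_t-\int_0^t\phi^\varepsilon\,\mathrm ds$, and Novikov's condition holds because $b^{(1)}$ is bounded in the measure variable (boundedness of $b$ and {\bf(A2)}, together with the Lipschitz bound). The second delicate point is the uniform-in-$\varepsilon$ $L^2$ convergence of the remainders when $K$ is only in $L^2([0,T])$. I would handle this by keeping $K_s^2$ inside all time integrals and using dominated convergence in $s$.
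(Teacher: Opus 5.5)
Your proposal is correct in substance. For the derivative formula it follows the route the paper has in mind, namely carrying the Girsanov argument over to mild solutions. You differ from the paper in how you get well-posedness, and in how explicit you are about the limit object.

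The paper takes weak existence for \eqref{SPDE} from \cite[Theorem 2.1]{HS}, using only the continuity and boundedness in {\bf(A2)}, and keeps Girsanov for uniqueness and the derivative. You instead run a Banach fixed point on $\mathscr C^\gamma_N$ in the single metric $\sup_t\e^{-\theta t}\|\cdot\|_{k,\mathrm{var}}$, driven by the Girsanov stability bound whose integrand has size $K_s\|\mu^1_s-\mu^2_s\|_{k,\mathrm{var}}$. That bound is exactly what a Girsanov uniqueness proof needs anyway, so you get existence for free and never use \cite{HS} or the continuity in {\bf(A2)}. The cost is heavier reliance on the frozen well-posedness and moment bound in {\bf(A1)}; for $k>0$ the H\"older step needs $p>k$, slightly more than the $p\ge k$ written there, and the paper's sketch implicitly needs the same.

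You also spell out what the paper hides behind ``corresponding assertions''. Since $Q$ is measure-free and the measure dependence of the drift lies in the range of $Q$, no compensator $\xi$ is needed and $H\equiv0$. Hence $\eta_{s,t}$ collapses to a one-parameter $\eta_s=J_s$, with $(\int_0^tK_s^2\,\d s)^{1/2}$ replacing $t^{1/2}$ in \eqref{SED}. So Theorem \ref{MR3} is really an infinite-dimensional version of \cite{PPED}, not of the two-parameter mechanism of Theorem \ref{MR}.

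A few things to tidy; none of them separates you from the paper.

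\textbf{Signs.} Your conventions are inconsistent. With $\phi^\varepsilon=b^{(1)}_s(X_s,P^*_s\mu)-b^{(1)}_s(X_s,P^*_s\pi_\varepsilon)$, the process solves the $\pi_\varepsilon$-equation with $\tilde W=W+\int_0^\cdot\phi^\varepsilon\,\d s$. But your $\tilde W=W-\int_0^\cdot\phi^\varepsilon\,\d s$ and your $J^\varepsilon_s$ (with $\nu-\mu$ and a plus sign) both correspond to $\phi^\varepsilon=b^{(1)}_s(X_s,P^*_s\pi_\varepsilon)-b^{(1)}_s(X_s,P^*_s\mu)$. Fix one convention throughout; the paper makes the same slip.

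\textbf{Novikov.} Boundedness of $b$ does not bound $b^{(1)}$ when $Q$ is degenerate. What gives Novikov is the deterministic bound $|\phi^\varepsilon(s)|\le CK_s\|P^*_s\pi_\varepsilon-P^*_s\mu\|_{k,\mathrm{var}}$ together with $K\in L^2$.

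\textbf{Moments in the key estimate.} With only $\mu\in\mathscr P_k$, the step $\E[(1+|X_s|^k)|\int_0^s\<\eta_r,\d W_r\>|]\le C(1+\mu(|\cdot|^k))(\int_0^s\E|\eta_r|^2\,\d r)^{1/2}$ does not follow from Cauchy--Schwarz, which would need $\mu(|\cdot|^{2k})<\infty$. The clean fix has three steps:
\begin{enumerate}
\item Note that $|\eta_s|\le a_s$ for a deterministic $a_s$. This holds because the $\inf_c$ bound in {\bf(A3)} is uniform in the $x$-variable and $(P^\mu_s)^*(\nu-\mu)$ has zero mass.
\item Apply H\"older and BDG conditionally on $\F_0$, using the $p$-moment bound from {\bf(A1)}.
\item Run Gronwall on $a_s^2$.
\end{enumerate}
Correspondingly, claim uniqueness in that class, i.e.\ among $\eta$ for which $J_s(\eta)$ is defined, rather than in all of $L^2(\Omega\times[0,T])$. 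The paper's \eqref{3.0} has the same looseness.
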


The weak existence is guaranteed by the local boundedness and continuity conditions, as shown in \cite[Theorem 2.1]{HS}.   It suffices to prove the uniqueness of \eqref{SPDE} and then calculate the extrinsic derivative. The proof relies crucially on Girsanov's transform, which is also available in the infinite-dimensional situation.
{{\begin{rem}
  We consider semi-linear distribution dependent SPDEs 
and generalise the extrinsic derivative formula to the infinite-dimensional framework.

\end{rem}}}

\section*{Acknowledgement}
The authors would like to thank the referees for corrections and helpful comments.

\end{document}

\end{document}